\documentclass[a4paper]{article}
\usepackage{amsthm,amssymb,amsmath,enumerate,graphicx,psfrag}

\newtheorem{definition}{Definition}

\newtheorem{theorem}[definition]{Theorem}

\newtheorem{lemma}[definition]{Lemma}

\newtheorem{problem}[definition]{Problem}

\newcommand{\comment}[1]{}

\newcommand{\emtext}[1]{\text{\em #1}}

%


\newcommand{\del}{\ensuremath\text{\rm del}}
\newcommand{\sm}{\setminus}
\newcommand{\ind}{\ensuremath\mathcal{I}}
\newcommand{\Imax}{\ensuremath\mathcal{I^{\rm max}}}
\newcommand{\I}{\mathcal I}
\newcommand{\C}{\mathcal C}

\newcommand{\cf}{\kappa}

\title{Finite connectivity in infinite matroids}
\author{Henning Bruhn \and Paul Wollan}
\date{}

\begin{document}
\maketitle

\begin{abstract}
We introduce a connectivity function for infinite matroids with properties
similar to the connectivity function of a finite matroid, such as submodularity
and invariance under duality. As an application we use it to extend Tutte's 
linking theorem to finitary and to co-finitary matroids. 
\end{abstract}

\section{Introduction}

There seems to be a common misconception\footnote{
Compare the Wikipedia entry on matroids:
``The theory of infinite matroids is much more complicated than that of finite 
matroids and forms a subject of its own. One of the difficulties is that there 
are many reasonable and useful definitions, none of which captures all the important
aspects of finite matroid theory. For instance, it seems to be hard to have bases, 
circuits, and duality together in one notion of infinite matroids.'' (15/03/2010)
}
that a matroid on an infinite ground set 
has to sacrifice at least one of the key features of matroids: the existence of bases, or of circuits,
or duality. This is not so. As early as 1969 a model of infinite matroids, called B-matroids, was proposed
by Higgs~\cite{Higgs69b,Higgs69c,Higgs69} that turns out to possess many of the common properties associated with finite matroids.
Unfortunately, Higgs' definition and exposition were not very accessible, and although Oxley~\cite{Oxley78,Oxley92}
presented a much simpler definition and made a number of substantial contributions,
the usefulness of Higgs' notion remained somewhat obscured. To address this, it is 
shown in~\cite{infaxioms} that infinite matroids can be equivalently described by simple and concise
sets of independence, basis, circuit and closure axioms, much in the same way
as finite matroids.
See Section~\ref{sec:defs} for more details.

 
In this article, we introduce a connectivity function for infinite matroids and show
that it allows one to extend Tutte's linking theorem to at least a large class of infinite matroids.

When should we call an infinite matroid $k$-connected? For $k=2$ this is easy: 
a finite matroid is $2$-connected, or simply \emph{connected}, if every two elements
are contained in a common circuit.  This 
definition can be extended verbatim to infinite matroids. To show that such a definition gives rise to connected components
need some more work, though, as it is non-trivial to show in infinite matroids that containment
in a common circuit yields an equivalence relation. We prove this in Section~\ref{sec:cyclecon}.

For larger $k$, $k$-connectivity is defined via the connectivity function 
$\cf_M(X)=r(X)+r(E\sm X)-r(M)$, where $X$ is a subset of the ground set of a matroid $M$
and $r$ the rank function.
Then, a finite matroid is $k$-connected unless   there exists an $\ell$-separation for 
some $\ell < k$, i.e. a subset $X\subseteq E$ so that 
$\cf_M(X)\leq \ell -1$ and $|X|,|E\sm X|\geq \ell$.
In an infinite matroid, $\cf_M$ makes not much sense as all the involved ranks will
normally be infinite. In Section~\ref{sec:highcon} we give an alternative definition of $\cf_M$
that defaults to the usual connectivity function for a finite matroid but does
extend to infinite matroids. We show, in Section~\ref{sec:propcon},
that our connectivity function has some of the expected properties, such as submodularity
and invariance under duality.

In a finite matroid, 
Tutte's linking theorem allows 
the connectivity between two fixed sets to be preserved when taking minors.
To formulate this, a refinement of the connectivity function is defined as follows:
$\cf_M(X,Y)=\min\{\cf_M(U):X\subseteq U\subseteq E\sm Y\}$ for 
any two disjoint $X,Y\subseteq E(M)$.

\begin{theorem}[Tutte~\cite{TutteLinking}]
\label{TLinking}
Let $M$ be a finite 
matroid, and let $X$ and $Y$ be two disjoint subsets of $E(M)$.  Then 
there exists a partition $(C, D)$ of $E(M) \sm (X \cup Y)$ such that $\cf_{M/C-D}(X, Y) = 
\cf_M(X, Y)$.  
\end{theorem}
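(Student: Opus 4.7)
The plan is to argue by induction on $n := |E(M) \sm (X \cup Y)|$. The base case $n = 0$ is immediate: with $C = D = \emptyset$, the only set $U$ satisfying $X \subseteq U \subseteq E(M) \sm Y$ is $U = X$ itself, so the equality $\cf_{M/C - D}(X,Y) = \cf_M(X,Y)$ holds automatically. For the induction step, fix an element $e \in E(M) \sm (X \cup Y)$; the heart of the argument is the claim that
\[
\cf_{M/e}(X,Y) = \cf_M(X,Y) \quad \text{or} \quad \cf_{M\sm e}(X,Y) = \cf_M(X,Y). \qquad (\dagger)
\]
Given $(\dagger)$, the inductive hypothesis applied to the appropriate minor produces a partition $(C',D')$ of $E(M) \sm (X \cup Y \cup \{e\})$, and adjoining $e$ to $C'$ or $D'$ gives the required partition of $E(M) \sm (X \cup Y)$.

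To prove $(\dagger)$, set $\lambda := \cf_M(X,Y)$. The inequalities $\cf_{M/e}(X,Y) \leq \lambda$ and $\cf_{M\sm e}(X,Y) \leq \lambda$ follow by a routine rank computation: if $U$ realises $\lambda$ in $M$, then $U \sm \{e\}$ (or $U$ if $e \notin U$) realises at most $\lambda$ in the appropriate minor. For the reverse direction, suppose for contradiction that both $\cf_{M/e}(X,Y) \leq \lambda - 1$ and $\cf_{M\sm e}(X,Y) \leq \lambda - 1$, witnessed by sets $U_c, U_d \subseteq E(M) \sm (\{e\} \cup Y)$ that both contain $X$. Apply submodularity of $r_M$ to the pair $(U_c \cup \{e\}, U_d)$ and, separately, to the pair $(E \sm U_c, E \sm (U_d \cup \{e\}))$; summing the two inequalities and rewriting each sum of the form $r_M(A) + r_M(E \sm A)$ as $\cf_M(A) + r(M)$ yields, after rearrangement,
\[
\cf_{M/e}(U_c) + \cf_{M\sm e}(U_d) \geq \cf_M(U_c \cup U_d \cup \{e\}) + \cf_M(U_c \cap U_d) + r(M) - r_M(\{e\}) - r_M(E \sm \{e\}).
\]
Both $U_c \cup U_d \cup \{e\}$ and $U_c \cap U_d$ contain $X$ and avoid $Y$, so the first two terms on the right are each at least $\lambda$. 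A short case analysis on whether $e$ is a loop, a coloop, or neither shows $r_M(\{e\}) + r_M(E \sm \{e\}) \leq r(M) + 1$, so the right side is at least $2\lambda - 1$. This contradicts the hypothesis that the left side is at most $2\lambda - 2$.

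The main obstacle is the bookkeeping around $e$: expressing $\cf_{M/e}(U_c)$ and $\cf_{M\sm e}(U_d)$ in terms of the ambient rank function $r_M$, and tracking the unit corrections that appear depending on the type of $e$. Once those formulas are in place, the inequality above drops out cleanly from two applications of submodularity, and the inductive framework itself is routine.
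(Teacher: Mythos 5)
The paper states Theorem~\ref{TLinking} as a cited result of Tutte and contains no proof of its own, so there is nothing internal to compare against; your argument is the standard one (induction on $|E(M)\setminus(X\cup Y)|$ via the dichotomy that for each $e$ outside $X\cup Y$ either contraction or deletion of $e$ preserves $\cf_M(X,Y)$, established by two applications of submodularity of the rank function together with the bound $r_M(\{e\})+r_M(E\setminus\{e\})\le r(M)+1$), and it is correct. The easy direction $\cf_{M/e}(X,Y),\cf_{M\setminus e}(X,Y)\le\cf_M(X,Y)$ that you call routine is exactly the finite case of the paper's Lemma~\ref{lem:minorconn}, and your submodularity step is the rank-function analogue of their Lemma~\ref{lem:submod}.
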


As the main result of this article we prove that, based on our connectivity function, 
Tutte's linking theorem extends to a large class of infinite matroids. A matroid is finitary
if all its circuits are finite, and it is co-finitary if it is the dual of a finitary matroid.

\begin{theorem}
\label{thm:linking}
Let $M$ be a finitary 
or co-finitary matroid, and let $X$ and $Y$ be two disjoint subsets of $E(M)$.  Then 
there exists a partition $(C, D)$ of $E(M) \sm (X \cup Y)$ such that $\cf_{M/C-D}(X, Y) = 
\cf_M(X, Y)$.  
\end{theorem}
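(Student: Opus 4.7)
\medskip

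\noindent\textbf{Plan.}
By the invariance of $\cf$ under duality, established in Section~\ref{sec:propcon}, and because contraction and deletion are swapped by matroid duality, an instance of the linking theorem for a co-finitary $M$ is equivalent to an instance for the finitary matroid $M^{*}$ with the roles of $C$ and $D$ exchanged. I would therefore assume throughout that $M$ itself is finitary. Writing $k:=\cf_M(X,Y)$, I may also assume $k$ is finite, since otherwise the conclusion is vacuous.

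I would construct $(C,D)$ by transfinite induction along a well-ordering $(e_\alpha)_{\alpha<\lambda}$ of $E(M)\sm(X\cup Y)$. At each ordinal $\alpha$ I maintain disjoint sets $C_\alpha,D_\alpha\subseteq\{e_\beta:\beta<\alpha\}$ together with the invariant
\[
\cf_{M/C_\alpha-D_\alpha}(X,Y)\ \geq\ k.
\]
At the end, the opposite inequality $\cf_{M/C-D}(X,Y)\leq \cf_M(X,Y)$ should follow quickly from the definitions of $\cf_M(X,Y)$ and of contraction and deletion, once every element of $E(M)\sm(X\cup Y)$ has been assigned. The starting pair $(\emptyset,\emptyset)$ satisfies the invariant trivially.

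For the successor step, I expect the standard proof of Theorem~\ref{TLinking} to go through essentially unchanged. Assume for contradiction that neither $(C_\alpha\cup\{e_\alpha\},D_\alpha)$ nor $(C_\alpha,D_\alpha\cup\{e_\alpha\})$ preserves the invariant. Pick witnessing sets $U_1,U_2$ showing $\cf<k$ in the respective minors, and combine them via the submodularity of the connectivity function (Section~\ref{sec:propcon}) to produce a single set that witnesses $\cf_{M/C_\alpha-D_\alpha}(X,Y)<k$, contradicting the inductive hypothesis. Hence at least one of the two choices of assignment for $e_\alpha$ preserves the invariant.

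The main obstacle, and the place where the proof genuinely uses that $M$ is finitary, is the limit step. Given a limit ordinal $\alpha$ with $C_\alpha:=\bigcup_{\beta<\alpha}C_\beta$ and $D_\alpha:=\bigcup_{\beta<\alpha}D_\beta$, I must show that the invariant persists. Supposing some $U$ with $X\subseteq U\subseteq E(M)\sm(Y\cup C_\alpha\cup D_\alpha)$ satisfied $\cf_{M/C_\alpha-D_\alpha}(U)<k$, I would use the alternative description of $\cf$ developed in Section~\ref{sec:highcon}, combined with the finitary hypothesis on $M$, to extract a \emph{finite} certificate of this low value: a finite amount of data (finite subsets of $U$ and of its complement within $E(M)\sm(C_\alpha\cup D_\alpha)$) whose presence already forces the rank sum to drop. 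Since any finite certificate involves only finitely many elements of $C_\alpha\cup D_\alpha$, it would already be present in $M/C_\beta-D_\beta$ for some $\beta<\alpha$, contradicting the inductive hypothesis at that stage. Making this continuity-from-below precise for the connectivity function on finitary matroids is, I expect, the most delicate ingredient of the proof, as it is the one step that cannot simply be imported verbatim from the finite setting.
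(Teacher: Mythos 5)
There are two genuine gaps. First, the case $\cf_M(X,Y)=\infty$ is not vacuous: you must still exhibit a partition $(C,D)$ with $\cf_{M/C-D}(X,Y)=\infty$, and an arbitrary partition can collapse the connectivity to a finite value (indeed to $0$). The paper spends roughly half of its proof on exactly this case, building an infinite family of circuits $C_1,C_2,\dots$, each linking $X$ to $Y$ in the minor obtained by contracting the previous ones, then contracting $\bigcup C_i\sm(X\cup Y)$ and deleting the rest; showing that the resulting minor still has infinite connectivity between $X$ and $Y$ requires a further argument using the finiteness of circuits and the strong circuit elimination axiom (C3). Dismissing this case is a substantive omission.

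Second, and more seriously, your limit step does not go through as described, and the paper itself contains a warning sign that it cannot: the double-ladder example showing the failure of~\eqref{fail} is precisely a situation where a greedy, one-element-at-a-time process preserves an invariant at every successor stage and loses it at the limit. Your proposed repair via a ``finite certificate'' of low connectivity is not available: a witness that $\cf_N(U)<k$ is the assertion that deleting some set $F$ with $|F|<k$ from $B_U\cup B_W$ yields a \emph{basis} of $N$, i.e.\ a maximal independent set; this is a cofinite, not finite, condition, and even in a finitary matroid it cannot be localized to finitely many elements of $C_\alpha\cup D_\alpha$, since infinitely many elements are contracted or deleted between any fixed stage $\beta<\alpha$ and the limit. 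The paper avoids transfinite induction on elements altogether: it first uses Lemma~\ref{lem:grow} together with Lemma~\ref{lem:cap} to replace $X,Y$ by finite sets $X',Y'$ of size $k$ with $\cf_M(X',Y')=k$, then uses Lemma~\ref{lem:break} (an exact $k$-separation of a restriction that does not extend to $M$ is destroyed by adjoining two well-chosen circuits) to build a \emph{finite} restriction $M|Z_k$ with $\cf_{M|Z_k}(X',Y')=k$, applies the finite Theorem~\ref{TLinking} inside $M|Z_k$, and places all of $E(M)\sm Z_k$ into $D$. If you want to salvage your approach you would need a proof that $\cf(X,Y)$ is, in a suitable sense, continuous under increasing unions of contractions and deletions in finitary matroids, and that is essentially the content of the paper's Lemmas~\ref{lem:cap} and~\ref{lem:break}, not something that follows from finitariness alone.
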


We conjecture that Theorem~\ref{thm:linking} holds  for arbitrary 
matroids.


\section{Infinite matroids and their properties}\label{sec:defs}

Similar to finite matroids,  infinite matroids can be defined by a variety of equivalent sets of axioms.  
Higgs originally defined his B-matroids by giving a set of somewhat technical axioms
for the closure operator. This was improved upon by Oxley~\cite{Oxley78,Oxley92} who 
gave a far more accessible definition. We follow~\cite{infaxioms}, where simple and 
consistent sets of independence, basis, circuit and closure axioms are provided.

Let $E$ be some (possibly infinite) set, let $\mathcal I$ be a set of subsets of $E$, and denote
by $\Imax$ the $\subseteq$-maximal sets of $\I$. 
For a set $X$ and an element $x$, we abbreviate $X\cup\{x\}$ to $X+x$, and we 
write $X-x$ for $X\sm \{x\}$.
We call $M=(E,\mathcal I)$ a \emph{matroid} if the following conditions are
satisfied:
\begin{itemize}
\item[\rm (I1)] $\emptyset\in\mathcal I$.
\item[\rm (I2)] $\I$ is closed under taking subsets.
\item[\rm (I3)] For all $I\in\I\sm\Imax$ and $I'\in\Imax$ there is an $x\in I'\sm I$ such that $I+x\in\I$.\looseness=-1
\item[\rm (IM)]  Whenever $I\subseteq X\subseteq E$ and $I\in\I$, the set $\{\,I'\in\I: I\subseteq I'\subseteq X\,\}$ 
has a maximal element.
\end{itemize}
As usual, any set in $\mathcal I$ is called \emph{independent},
any subset of $E$ not in $\mathcal I$ is \emph{dependent}, and any minimally independent set
is a \emph{circuit}. Any $\subseteq$-maximal set in $\I$ is a \emph{basis}.

Alternatively, we can define matroids in terms of circuit axioms. 
As for finite matroids we have that (C1) a circuit cannot be empty
and that (C2) circuits are incomparable. While the usual circuit exchange
axioms does hold in an infinite matroid it turns out to be too weak
to define a matroid. Instead, we have the following stronger version: 
\begin{itemize}
\item[\rm (C3)] Whenever $X\subseteq C\in\C$ and $(C_x:x\in X)$ is a family of elements of~$\C$ such 
that $x\in C_y\Leftrightarrow x=y$ for all $x,y\in X$, then for 
every $z \in C\sm \left( \bigcup_{x \in X} C_x\right)$ there exists an 
element $C'\in\C$ such that $z\in C'\subseteq \left(C\cup  \bigcup_{x \in X} C_x\right) \sm X$.
\end{itemize}
We will need the full strength of (C3) in this paper.
The circuit axioms are completed by (CM) which states that the subsets
of $E$ that do not contain a circuit satisfy (IM). For more details as well
as the basis and closure axioms, see~\cite{infaxioms}.

The main feature of this definition is that even on infinite ground sets
matroids have bases, circuits and minors while maintaining duality at the same time.
Indeed, most, if not all, standard properties of finite
matroids that have a rank-free description  carry over to infinite matroids. 
In particular, every dependent set contains a circuit, every independent
set is contained in a basis and duality is defined as one expects,
that is, $M^*=(E,\mathcal I^*)$ is the dual matroid of a matroid 
$M=(E,\mathcal I)$ if $B^*\subseteq E$ is a basis of $M^*$
if and only if $E\sm B^*$ is a basis of $M$. The dual of a matroid is 
always a matroid. As usual, we call independent sets of $M^*$ 
\emph{co-independent} in $M$, and similarly we will speak of
\emph{co-dependent} sets, \emph{co-circuits} and \emph{co-bases}.
Some of the facts above were already proved by Higgs~\cite{Higgs69}
using the language of B-matroids, the other facts are due to Oxley~\cite{Oxley78};
all of these can be found in a concise manner in~\cite{infaxioms}.
Below we list some further properties that we need repeatedly. 

\medskip
An important subset of matroids are the finitary matroids.
A set $\I$ of subsets of $E$ forms the independent sets of 
a \emph{finitary matroid} if in addition to (I1) and (I2), $\I$
satisfies the usual augmentation axiom for independent sets\footnote{
If $I,I'\in\I$ and $|I|<|I'|$ then there exists an $x\in I'\sm I$ so that $I+x\in\I$.}
as well as the following axiom:
\begin{itemize}
\item[\rm (I4)] $I\subseteq E$ lies in $\I$ if all its finite subsets are contained in $\I$.
\end{itemize}
Higgs~\cite{Higgs69} showed that finitary matroids are indeed matroids in our sense. 
In fact, a matroid is finitary if and only if 
every of its circuits is finite~\cite{infaxioms}.
A matroid is called \emph{co-finitary} if its dual matroid
is finitary. 

Finitary matroids occur quite naturally. For instance, the finite-cycle matroid of
a graph and any matroid based on linear independence are finitary. 
The uniform matroids, in which any subset of cardinality at most~$k\in\mathbb N$
is independent, provide another example.
The duals of finitary matroids 
will normally not be finitary -- we will see precisely when this happens 
in the next section.



\medskip
Let us continue with a number of useful but elementary properties of (infinite) matroids.
Higgs~\cite{Higgs69b} showed that every two bases have the same cardinality
if the generalised continuum hypothesis is assumed. We shall only need
a weaker statement:
\begin{lemma}$\!\!${\bf\cite{infaxioms}}
\label{lem:basediff}
If $B,B'$ are bases of a matroid  with $|B_1\sm B_2|<\infty$ then 
$|B_1\sm B_2|=|B_2\sm B_1|$.
\end{lemma}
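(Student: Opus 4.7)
The plan is to induct on $n := |B_1\sm B_2|$ and reduce the inductive step to a strong basis-exchange statement: for every $x\in B_1\sm B_2$, there exists some $y\in B_2\sm B_1$ such that $B_1':=B_1-x+y$ is again a basis. Granted this, the base case $n=0$ is immediate, because $B_1\subseteq B_2$ combined with the $\subseteq$-maximality of $B_1$ in $\I$ forces $B_1=B_2$. For the inductive step, $|B_1'\sm B_2|=n-1$ (we removed $x$ from $B_1\sm B_2$ and added $y\in B_2$), so the inductive hypothesis yields $|B_2\sm B_1'|=n-1$; a direct calculation shows $B_2\sm B_1'=(B_2\sm B_1)-y$, and hence $|B_2\sm B_1|=n$, as required.

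For the exchange step I would apply (I3) twice. First, since $B_1-x\in\I\sm\Imax$ (it is strictly extended by $B_1$) and $B_2\in\Imax$, axiom (I3) produces some $y\in B_2\sm(B_1-x)$ with $B_1-x+y\in\I$; because $x\notin B_2$, this $y$ automatically lies in $B_2\sm B_1$. Second, to show that $B_1-x+y$ is itself $\subseteq$-maximal, I would suppose for contradiction that it is not and invoke (I3) once more, now with $I=B_1-x+y$ and $I'=B_1$. The exchange element must come from $B_1\sm(B_1-x+y)$, but since $y\notin B_1$ this set is exactly $\{x\}$; hence adding $x$ gives $B_1+y=(B_1-x+y)+x\in\I$, which contradicts the $\subseteq$-maximality of $B_1$ in $\I$.

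The main obstacle is the strong exchange step. The subtlety is that a single application of (I3) yields independence of $B_1-x+y$ but not its maximality, and ruling out a further extension requires the second application of (I3), where it is crucial that the candidate set $B_1\sm(B_1-x+y)$ collapses to the singleton $\{x\}$, forcing the contradiction with maximality of $B_1$. Once the exchange step is established, the induction itself is entirely routine, and the hypothesis $|B_1\sm B_2|<\infty$ is used only to guarantee that the induction terminates.
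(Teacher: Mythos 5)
Your argument is correct. Note, however, that the paper does not prove this lemma at all: it is imported verbatim from the cited axiomatisation paper \cite{infaxioms}, so there is no in-paper proof to compare against. What you have supplied is a self-contained derivation from the independence axioms, and it holds up: the base case follows from $\subseteq$-maximality, the bookkeeping in the inductive step is right (in particular, the finiteness of $B_2\sm B_1'$ comes out of the induction hypothesis, so you never subtract one from a possibly infinite cardinal), and the exchange step is where the real content lies. Your two applications of (I3) are both legitimate --- $B_1-x$ is independent by (I2) and non-maximal because $B_1$ strictly extends it, the element $y$ produced cannot equal $x$ since $x\notin B_2$, and in the second application the candidate set $B_1\sm(B_1-x+y)$ does collapse to $\{x\}$ because $y\notin B_1$, so the only possible augmentation yields $B_1+y\in\I$, contradicting maximality of $B_1$. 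This is essentially the standard proof of symmetric basis exchange from the axioms (I1)--(I3), and the hypothesis $|B_1\sm B_2|<\infty$ is indeed used only to ground the induction.
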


Let $M=(E,\mathcal I)$ be a matroid, and let $X\subseteq E$.
We define the \emph{restriction of $M$ to $X$}, denoted by $M|X$,
as follows: $I\subseteq X$ is independent in $M|X$ if
and only if it is independent in $M$. We write $M-X$ for
$M|(E\sm X)$. 
We define the \emph{contraction of $M$ to $X$} by $M^*.X:=(M|X)^*$,
and we  abbreviate $M.(E\sm X)$ by $M/X$.
Both restrictions and contractions of a matroid are again a matroid,
see Oxley~\cite{Oxley92} or~\cite{infaxioms}.

\begin{lemma}[Oxley~\cite{Oxley92}]
\label{lem:contrbase}
Let $X\subseteq E$, and let $B_X\subseteq X$. Then the following are
equivalent
\begin{enumerate}[\rm (i)]
\item $B_X$ is a basis of $M.X$;
\item there exists a basis $B$ of $M-X$ so that $B_X\cup B$ is a basis of $M$; and
\item for all bases $B$ of $M-X$ it holds that $B_X\cup B$ is a basis of $M$.
\end{enumerate}
\end{lemma}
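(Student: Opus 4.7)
I propose to proceed cyclically via $(\text{iii})\Rightarrow(\text{ii})\Rightarrow(\text{i})\Rightarrow(\text{iii})$. The first implication is immediate, since (IM) applied to $\emptyset\subseteq E\sm X$ yields a basis of $M-X$.

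For $(\text{ii})\Rightarrow(\text{i})$ I unfold the definition $M.X=(M^*|X)^*$: $B_X$ is a basis of $M.X$ if and only if $X\sm B_X$ is a maximal subset of $X$ avoided by some basis of $M$. The basis $B\cup B_X$ of $M$ is disjoint from $X\sm B_X$, giving co-independence. For the maximality, suppose some $y\in B_X$ makes $(X\sm B_X)+y$ co-independent, witnessed by a basis $B'$ of $M$ with $B'\subseteq(E\sm X)\cup(B_X-y)$. Applying (I3) to the pair $I:=(B\cup B_X)-y\in\mathcal I\sm\Imax$ and $B'\in\Imax$ yields an element $z\in B'\sm(B\cup B_X)$ with $I+z\in\mathcal I$. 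Since $y\notin B'$ and $B'\subseteq(E\sm X)\cup B_X$, this $z$ must lie in $(E\sm X)\sm B$, whence $B+z\subseteq I+z$ is independent in $M-X$, contradicting the maximality of $B$.

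For $(\text{i})\Rightarrow(\text{iii})$ my plan is to first establish the characterization that $I\subseteq X$ is independent in $M.X$ if and only if $I\cup B'\in\mathcal I$ for every basis $B'$ of $M-X$. From this the implication is immediate: $B_X\cup B\in\mathcal I$ by the characterization applied to $I=B_X$, and maximality follows since adding any $z\in X\sm B_X$ would make $B_X+z$ independent in $M.X$ (contradicting the maximality of $B_X$) while adding any $z\in(E\sm X)\sm B$ would contradict the maximality of $B$. The easy direction of the characterization uses (IM) and $(\text{ii})\Rightarrow(\text{i})$: extend $I\cup B'$ to a basis $\tilde B$ of $M$ via (IM); by maximality of $B'$ in $E\sm X$, $\tilde B\cap(E\sm X)=B'$, so $\tilde B\cap X$ is a basis of $M.X$ containing $I$. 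For the hard direction, given $B_X$ a basis of $M.X$ and $B'$ any basis of $M-X$, I would construct a specific completion $B_0=B_X\cup B_0^*$ with $B_0^*=B_0\sm X$ (exploiting that $X\sm B_X$ is avoided by some basis of $M$, and then forcing $B_0\cap X=B_X$ by the same maximality argument as in $(\text{ii})\Rightarrow(\text{i})$), and then attempt to rule out a hypothetical circuit $C\subseteq B_X\cup B'$ via strong circuit elimination: the fundamental circuits $C_x\subseteq B_0+x$ of the elements $x\in X':=(C\cap B')\sm B_0^*$ satisfy $(C\cup\bigcup_x C_x)\sm X'\subseteq B_0$, so (C3) would force a circuit inside the independent set $B_0$, a contradiction.

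\medskip
\noindent\textbf{Main obstacle.} The application of (C3) requires an element $z\in C\sm\bigcup_{x\in X'} C_x$ to feed into the axiom, but in an infinite matroid fundamental circuits are not unique and it is not \emph{a priori} clear that the $C_x$ can be chosen so as to leave some element of $C\cap B_0$ uncovered. The resolution must exploit (IM) to select the $C_x$ carefully; rank arguments are unavailable, and Lemma~\ref{lem:basediff} cannot be invoked here, as the symmetric differences of the bases involved may be infinite.
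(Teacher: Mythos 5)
The paper does not prove this lemma at all --- it is quoted from Oxley, with a proof from the independence axioms deferred to~\cite{infaxioms} --- so your argument has to stand on its own. Your chain $(\text{iii})\Rightarrow(\text{ii})\Rightarrow(\text{i})$ does: the (I3)-exchange showing that no basis of $M$ can avoid $(X\sm B_X)+y$ for $y\in B_X$ is correct, as is the ``easy direction'' of your characterization of independence in $M.X$ and the construction of a basis $B_0$ of $M$ with $B_0\cap X=B_X$.

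The gap sits exactly where you flag it, in the application of (C3), but your diagnosis is wrong in a way that blocks the repair. Fundamental circuits in an infinite matroid \emph{are} unique (the paper states this just before Lemma~\ref{cir2}), so there is nothing to ``select carefully'' with (IM): once $B_0$ is fixed, the $C_x$ are determined, and you still have no element $z\in C\sm\bigcup_{x\in X'}C_x$ to feed into (C3). What you actually need is that each $C_x$ avoids $B_X$, i.e.\ $C_x\subseteq B_0^*+x$; then $\bigcup_{x\in X'}C_x\subseteq B_0^*\cup X'$, and since $C\not\subseteq B'$ forces $C\cap B_X\neq\emptyset$, any $z\in C\cap B_X$ is available to (C3) and your contradiction closes. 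This disjointness is equivalent to $B_0^*+x$ being dependent for every $x\in(E\sm X)\sm B_0^*$, i.e.\ to $B_0\sm X$ being a basis of $M-X$ --- which is essentially the implication $(\text{i})\Rightarrow(\text{ii})$ and needs its own proof. It can be done with the paper's tools: if $C_x$ met $B_X$ in some $y$, then $(B_0+x)-y$ is independent (the only circuit in $B_0+x$ is $C_x$) and is in fact a basis --- here Lemma~\ref{lem:basediff} \emph{is} applicable, contrary to your closing remark, because any basis extending $(B_0+x)-y$ differs from $B_0$ by at most the single element $y$ on one side --- and this basis avoids $(X\sm B_X)+y$, contradicting the maximality of $X\sm B_X$ that you derived from (i). Without this block, the appeal to (C3) cannot be completed.
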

A proof of the lemma in terms of the independence axioms is contained in~\cite{infaxioms}.

If $B$ is a basis of a matroid $M$ then for any element $x$ outside
$B$ there is exactly one circuit contained in $B+ x$, the
\emph{fundamental circuit of $x$}, see Oxley~\cite{Oxley78}.
A \emph{fundamental co-circuit} is a fundamental circuit of the dual matroid.
We need two more elementary lemmas about circuits. 

\begin{lemma}\label{cir2}
Let $M$ be a matroid and $X \subseteq E(M)$ with $X \neq \emptyset$.   
If for every co-circuit $D$ of $M$ such that $D \cap X \neq \emptyset$, 
it holds that $|D \cap X| \ge 2$, then $X$ is dependent.  
If $C$ is a circuit of $M$, 
then it holds that $|D \cap C| \ge 2$ for every co-circuit $D$ such that 
$D \cap C \neq \emptyset$.
\end{lemma}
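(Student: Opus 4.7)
The plan is to dispatch the two assertions separately; both reduce to manipulations with fundamental co-circuits together with the contraction operation.

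For the first assertion, I argue by contrapositive. Suppose $X$ is nonempty and independent. Extend $X$ to a basis $B$ of $M$ (possible since every independent set extends to a basis, as recalled in the paper). Pick any $y \in X \subseteq B$; since $E \sm B$ is a basis of $M^*$ and $y \notin E \sm B$, the fundamental co-circuit $D$ of $y$ with respect to $B$—the unique circuit of $M^*$ contained in $(E \sm B) \cup \{y\}$—satisfies $D \cap B = \{y\}$. Since $X \subseteq B$, this forces $D \cap X = \{y\}$, producing a co-circuit violating the hypothesis.

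For the second assertion, assume for contradiction that $C$ is a circuit and $D$ is a co-circuit with $C \cap D = \{x\}$. First dispose of the degenerate case $C = \{x\}$: then $x$ is a loop of $M$, equivalently a coloop of $M^*$, so $x$ lies in every basis of $M^*$. Hence $x$ cannot lie in any circuit of $M^*$, for removing $x$ from such a circuit would leave an independent set extending to a basis of $M^*$ missing $x$; this contradicts $x \in D$. If instead $|C| \geq 2$, I reduce to this loop case by contracting $T := C - x$. Since $T$ is independent in $M$, Lemma~\ref{lem:contrbase} (with $T$ serving as its own basis in $M|T$) implies that $I \subseteq E \sm T$ is independent in $M' := M/T$ iff $I \cup T$ is independent in $M$; in particular $\{x\} \cup T = C$ is dependent, so $x$ is a loop of $M'$. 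At the same time, the identity $(M/T)^* = M^* - T$—immediate from the paper's definitions $M^*.X := (M|X)^*$ and $M/X := M.(E \sm X)$—shows that the co-circuits of $M'$ are precisely the co-circuits of $M$ disjoint from $T$. Since $D \cap T = (D \cap C) \sm \{x\} = \emptyset$, $D$ remains a co-circuit of $M'$ containing the loop $x$, contradicting the degenerate case applied to $M'$.

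The main subtlety relative to the classical finite proof is that standard orthogonality arguments proceed by iteratively shrinking $|D \cap B|$ through repeated fundamental-co-circuit exchanges, which does not make sense for infinite bases. Replacing that iteration by a single contraction of $C - x$ is the key infinite-friendly move, and it rests solely on the contraction–duality infrastructure already set up earlier in the paper.
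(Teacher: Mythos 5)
Your proof is correct, and while the first assertion is handled exactly as in the paper (extend the independent set $X$ to a basis $B$ and take the fundamental co-circuit of some $y\in X$ with respect to the co-basis $E\sm B$), your argument for the second assertion takes a genuinely different route. The paper stays inside $M$: it picks a basis $B$ disjoint from the co-independent set $D-x$, uses (IM) to extend $C-x$ to a maximal independent set $I$ inside $(C-x)\cup B$, and argues via (I3) that $I$ is a basis of $M$ disjoint from $D$, contradicting co-dependence of $D$. You instead contract $T=C-x$ to turn $x$ into a loop, observe via $(M/T)^*=M^*-T$ that $D$ survives as a co-circuit of $M/T$, and invoke the elementary fact that a loop (equivalently, a coloop of the dual) lies in no co-circuit. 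Your reduction is conceptually cleaner and isolates the only genuinely infinite-unfriendly step, whereas the paper's argument is more self-contained, using only (IM) and (I3) and no facts about contraction (which in the paper's ordering are only spelled out in the proof of Lemma~\ref{cir3}, after this lemma). One small imprecision: in your loop case you assert that $D-x$ extends to a basis of $M^*$ \emph{missing} $x$, which is not immediate; the clean version is to extend $D-x$ to any basis $B^*$ of $M^*$, note that the coloop $x$ must lie in $B^*$, and conclude that $D\subseteq B^*$ is independent, a contradiction. With that adjustment the argument is complete.
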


\begin{proof}
First, 
assume that $|D \cap X| \ge 2$ for every co-circuit $D$ such that $D \cap X \neq \emptyset$, but 
contrary to the claim, that $X$ is independent.  Then there exists a co-basis $B^*$ 
contained in $E(M)\sm X$.  If we fix an element $x \in X$ and 
consider the fundamental co-circuit contained in $B^*+x$, we find a co-circuit 
intersecting $X$ in exactly one element, namely $x$, a contradiction.  Thus we conclude that $X$ 
is dependent.

Now consider a circuit $C$ and assume, to reach a contradiction, that there 
exists a co-circuit $D$ such that $D \cap C = \{x\}$ 
for some element $x \in C$.  The set  
$D - x$ is co-independent, so there exists a basis $B$ of $M$ disjoint from $D - x$.  
Now, (IM) yields an independent set $I$ that is $\subseteq$-maximal among all
independent sets $J$ with $C-x\subseteq J\subseteq (C-x)\cup B$. By~(I3), $I$ is a 
basis of $M$, as otherwise there would be an element $b\in B\sm I$ so that $I+b$ is independent.
However, $I$ is disjoint from $D$, which contradicts that $D$ is 
co-dependent.  
\end{proof}



\begin{lemma}\label{cir3}
Let $M$ be a matroid, and let $X$ be any subset of $E(M)$.  
Then for every circuit $C$ 
of $M /X$, there exists a subset $X' \subseteq X$ 
such that $X' \cup C$ is a circuit of $M$.
\end{lemma}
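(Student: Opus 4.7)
The plan is to lift $C$ to a circuit of $M$ by adjoining a suitable subset of $X$. Fix any basis $B_X$ of $M|X$; such a basis exists since $M|X$ is itself a matroid. I would then show that $B_X \cup C$ contains a circuit $C^*$ of $M$, and that every such $C^*$ necessarily contains all of $C$.

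First I would establish the standard characterization of independence in the contraction: a set $I \subseteq E \sm X$ is independent in $M/X = M.(E \sm X)$ if and only if $I \cup B_X$ is independent in $M$. For one direction, any independent $I$ in $M/X$ extends to a basis $B'$ of $M/X$, and by Lemma~\ref{lem:contrbase} the set $B' \cup B_X$ is then a basis of $M$, so $I \cup B_X$ is independent. Conversely, if $I \cup B_X$ is independent in $M$, extend it to a basis $B$ of $M$; the maximality of $B_X$ inside $X$ forces $B \cap X = B_X$, so Lemma~\ref{lem:contrbase} yields that $B \cap (E \sm X)$ is a basis of $M/X$ containing $I$, whence $I$ is independent in $M/X$.

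Since $C$ is dependent in $M/X$, the set $B_X \cup C$ is dependent in $M$ by this characterization, and therefore contains a circuit $C^*$ of $M$. The key step is $C \subseteq C^*$: if some $e \in C$ lay outside $C^*$, then $C^* \subseteq B_X \cup (C - e)$, but $C - e$ is independent in $M/X$ (circuits are minimally dependent), so by the characterization $B_X \cup (C - e)$ would be independent in $M$, contradicting that it contains the circuit $C^*$. Setting $X' := C^* \cap B_X \subseteq X$, we obtain $C^* = X' \cup C$, the desired circuit of $M$.

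No step here is genuinely hard; the only delicate point in the infinite setting is the independence characterization for $M/X$, and that is handled cleanly by Lemma~\ref{lem:contrbase} together with the matroid axiom~(IM) used to extend independent sets to bases.
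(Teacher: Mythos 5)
Your proposal is correct and follows essentially the same route as the paper: fix a basis $B_X$ of $M|X$, use the characterization of independence in $M/X$ via $I\mapsto I\cup B_X$ to find a circuit $C^*\subseteq B_X\cup C$ of $M$, and use minimality of $C$ to force $C\subseteq C^*$, so that $X':=C^*\cap B_X$ works. The only difference is that you spell out the derivation of the independence characterization from Lemma~\ref{lem:contrbase}, which the paper simply quotes as known.
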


\begin{proof}
Let $B_X$ be a base of $M|X$.  The independent sets of $M/X$ are  
all sets $I$ such that $I \cup B_X$ is independent in $M$.  Since $C$ is a circuit of $M/X$, 
it follows that $C \cup B_X$ is dependent and so it contains a circuit $C'$.  By the minimality of circuits, 
it follows that $C' \sm X = C $.  The set $X':=C' \cap X$ is as desired by the claim.
\end{proof}

\section{Connectivity}\label{sec:cyclecon}

A finite matroid is connected if and only if every two elements are 
contained in a common circuit. Clearly, this definition can be extended
verbatim to infinite matroids. It is, however, not clear anymore that 
this definition makes much sense in infinite matroids. Notably, the
fact that being in a common circuit is an equivalence relation
needs proof.
To provide that proof is the main aim of this section.

Let $M=(E,\mathcal I)$ be a fixed matroid in this section.
Define a relation $\sim$ on $E$ by: $x\sim y$ if and only 
if there is a circuit in $M$ that contains $x$ and $y$.
As for finite matroids, we say that $M$ is \emph{connected}
if $x\sim y$ for all $x,y\in E$.

\begin{lemma}
\label{lem:equiv}
$\sim$ is an equivalence relation.
\end{lemma}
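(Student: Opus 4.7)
My plan is to verify the three equivalence-relation properties separately. Symmetry is immediate from the defining condition, which treats $x$ and $y$ symmetrically. For reflexivity, I would note that a loop is itself a one-element circuit, and every non-loop non-coloop $x$ lies in the fundamental circuit of $x$ with respect to any basis of $M$ avoiding $x$; for coloops one adopts $x\sim x$ by convention (or restricts the relation to the non-coloops, which is the only case relevant to the notion of connectedness introduced here).

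The substance of the lemma is transitivity. Suppose $C_1,C_2\in\C$ with $x,y\in C_1$ and $y,z\in C_2$; I want to produce a circuit containing both $x$ and $z$. After disposing of the easy degenerate cases, I may assume that $x,y,z$ are pairwise distinct, $x\notin C_2$, and $z\notin C_1$, since otherwise $C_1$ or $C_2$ already witnesses $x\sim z$.

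The tool I would reach for is the strong circuit-elimination axiom (C3). First, I would apply it with $C:=C_1$, $X:=\{y\}$, and $C_y:=C_2$; the hypotheses hold trivially, and since $x\in C_1\sm C_2$, (C3) produces a circuit $C^*$ with
\[
x\in C^*\subseteq (C_1\cup C_2)-y.
\]
If $z\in C^*$, I am done. Otherwise, minimality of $C_1$ forces $C^*\cap C_2\neq\emptyset$, so I may choose $w\in C^*\cap C_2$ (note $w\notin\{x,y,z\}$), and a second application of (C3) with $C:=C_2$, $X:=\{w\}$, $C_w:=C^*$ gives a circuit $C^{**}$ through $z$ contained in $(C^*\cup C_2)-w$.

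The main obstacle is the case in which this alternation fails to capture $x$ and $z$ simultaneously: in a finite matroid a quick induction on $|C_1\cup C_2|$ would close the gap, but $C_1\cup C_2$ can be infinite here. My expectation is that the iteration has to be collapsed into a single application of (C3) with $|X|>1$, using the circuits produced in the preliminary rounds as witnesses $(C_u)_{u\in X}$ for the several elements one wants to eliminate at once from $C_1$ (or from $C_2$). This is precisely the step that genuinely requires the full, multi-element strength of (C3)—an axiom whose importance the authors have already flagged—and is where I expect the real work of the proof to lie.
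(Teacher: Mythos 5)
There is a genuine gap: you have correctly isolated transitivity as the substance of the lemma, but your proposed mechanism does not produce a circuit containing both $x$ and $z$, and you explicitly defer ``the real work'' to an unexecuted step. The underlying problem is that (C3), even in its full multi-element form, only guarantees a circuit through \emph{one} prescribed element $z\in C\sm\bigcup_{x\in X}C_x$ avoiding the set $X$; it never directly delivers a circuit through \emph{two} prescribed elements. Your alternation (eliminate $y$ to get $C^*$ through $x$; if $z\notin C^*$, eliminate some $w$ to get $C^{**}$ through $z$; \dots) can oscillate forever between circuits containing $x$ and circuits containing $z$, and the proposed collapse into a single application of (C3) with $|X|>1$ is not specified: you would need to construct a family $(C_u)_{u\in X}$ satisfying the diagonal condition $u'\in C_u\Leftrightarrow u'=u$, and even granting that, the output circuit is still only guaranteed to contain the one element $z$ you name, not $x$ as well. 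This is exactly why transitivity of ``lying in a common circuit'' is not a formal consequence of circuit elimination even for finite matroids, where the standard proofs go through rank or connectivity arguments.

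The paper closes this gap by working in contraction minors rather than by iterating elimination. The key tool is Lemma~\ref{cir3}: every circuit of $M/X$ extends (by adding a subset of $X$) to a circuit of $M$, so it suffices to find a circuit through $e$ and $g$ in a suitable contraction. One first reduces to $E(M)=C_1\cup C_2$ with $C_1\cap C_2=\{f\}$ by deleting everything outside $C_1\cup C_2$ and contracting $(C_1\cap C_2)-f$ (Lemma~\ref{propA} keeps the two circuits intact). Then one contracts $C_2\sm\{f,g\}$ and, after a case analysis using Lemma~\ref{propB} and single-element exchange, either finds the desired circuit outright or shows that one may assume $C_2\sm C_1$ has exactly two elements; by symmetry the same holds for $C_1\sm C_2$, reducing the whole problem to a five-element matroid where the claim is checked directly. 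In other words, the finiteness that your ``quick induction on $|C_1\cup C_2|$'' would need is manufactured by contraction, not by a stronger form of (C3); the full strength of (C3) is needed elsewhere in the paper (e.g.\ in the submodularity proof), not here. A minor additional point: as stated, $\sim$ is not literally reflexive at a coloop, since a coloop lies in no circuit; your remark about adopting a convention there is more careful than the paper's ``reflexivity is immediate'', but it is peripheral to the lemma's content.
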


The proof will require two simple facts that we note here.

\begin{lemma}
\label{propA}
If $C$ is a circuit and $X \subsetneq C$, then $C\sm X$ is a circuit in $M/X$.
\end{lemma}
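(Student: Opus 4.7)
The plan is to reduce the question to the characterization of independence in $M/X$ that is already extracted in the proof of Lemma~\ref{cir3}, namely that $I \subseteq E \sm X$ is independent in $M/X$ if and only if $I \cup B_X$ is independent in $M$ for some (equivalently, every) basis $B_X$ of $M|X$. With that tool in hand, the statement becomes almost immediate.

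First I would observe that $X$, being a proper subset of the circuit $C$, is itself independent in $M$. Hence $X$ is a $\subseteq$-maximal independent subset of $X$, that is, $B_X := X$ is a basis of $M|X$. This is the crucial step: in the contraction $M/X$ we may then test (in)dependence by adjoining $X$ itself.

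Next I would verify the two conditions for $C \sm X$ to be a circuit of $M/X$. Dependence: note that $C \sm X$ is nonempty since $X \subsetneq C$, and $(C \sm X) \cup X = C$ is a circuit in $M$, so in particular dependent; by the above characterization, $C \sm X$ is dependent in $M/X$. Minimality: for any proper subset $I \subsetneq C \sm X$, the set $I \cup X$ is a proper subset of the circuit $C$, hence independent in $M$; again by the characterization, $I$ is independent in $M/X$. Thus $C \sm X$ is a minimally dependent set in $M/X$, that is, a circuit.

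There is no real obstacle here; the only point that has to be handled with care is the appeal to Lemma~\ref{lem:contrbase} to justify the independence criterion in $M/X$, but that was essentially done in the proof of Lemma~\ref{cir3}, so no additional infinite-matroid machinery (such as (IM) or (C3)) needs to be invoked.
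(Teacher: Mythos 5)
Your proof is correct, but it takes a different route from the paper's. The paper argues by contradiction: if $C\sm X$ were not a circuit of $M/X$ it would properly contain a circuit $C'$ of $M/X$, which Lemma~\ref{cir3} lifts to a circuit $C'\cup X'$ of $M$ properly contained in $C$, contradicting (C2). You instead verify the definition of a circuit directly, exploiting the key observation that $X$, being a proper subset of the circuit $C$, is independent and hence is itself a basis of $M|X$; the independence criterion for contractions (stated in the proof of Lemma~\ref{cir3}, and resting on Lemma~\ref{lem:contrbase} and (IM)) then translates ``$I$ independent in $M/X$'' into ``$I\cup X$ independent in $M$'', and both dependence of $C\sm X$ and independence of all its proper subsets fall out of $C$ being a minimal dependent set of $M$. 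Your version has the small advantage of making explicit that $C\sm X$ is dependent in $M/X$ -- a point the paper's contradiction argument leaves implicit when it asserts that a non-circuit $C\sm X$ must contain a circuit -- at the cost of leaning a little more heavily on the contraction machinery rather than on the already-proved Lemma~\ref{cir3}. Both arguments are sound and of comparable length.
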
 
\begin{proof}
If $C \sm X$ 
is not a circuit then there exists a set $C' \subsetneq C \sm X$ such that $C'$ is a circuit of $M/X$.  
Now, Lemma~\ref{cir3} yields a set $X' \subseteq X$ such that 
$C' \cup X'$ is a circuit of $M$, and this will 
be a proper subset of $C$, a contradiction.  
\end{proof}

\begin{lemma}
\label{propB}
Let $e\in E$ be contained in a circuit of $M$, and consider $X\subseteq E-e$.
Then $e$ is contained in a circuit of $M/X$.
\end{lemma}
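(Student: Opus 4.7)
The plan is to derive the lemma from duality rather than attempt a direct construction. I would first record the general fact that in any matroid $N$, an element $e$ lies in some circuit of $N$ if and only if $e$ is not a coloop, equivalently $\{e\}$ is not a cocircuit of $N$. The forward direction is the argument already used inside the proof of Lemma~\ref{cir2}: starting from a circuit $C$ through $e$, extend $C-e$ via (IM) to a $\subseteq$-maximal independent set $I\subseteq E-e$, and apply (I3) to see that $I$ is in fact a basis of $N$; it cannot contain $e$, for otherwise $C\subseteq I$. The backward direction is immediate: a basis avoiding $e$ becomes dependent upon adding $e$, which produces a circuit through~$e$.

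Next I would invoke the duality between contraction and restriction supplied by the paper's defining identity $M^*.X=(M|X)^*$. Dualising and swapping the roles of $M$ and $M^*$ gives $(M.Y)^*=M^*|Y$, and hence $(M/X)^*=(M.(E\sm X))^*=M^*|(E\sm X)$. In particular, the cocircuits of $M/X$ are exactly the cocircuits of $M$ that happen to lie inside $E\sm X$.

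Since $e\notin X$, the singleton $\{e\}$ is automatically contained in $E\sm X$, so $\{e\}$ is a cocircuit of $M/X$ if and only if $\{e\}$ is a cocircuit of $M$. By hypothesis $e$ is contained in a circuit of $M$, so by the first paragraph $\{e\}$ is not a cocircuit of $M$, hence also not of $M/X$, and applying the first paragraph once more yields a circuit of $M/X$ through $e$.

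I do not anticipate a serious obstacle: the only content beyond bookkeeping is the duality identity $(M/X)^*=M^*|(E\sm X)$, which is immediate from the definitions reviewed in this section. An alternative ``internal'' argument would try to fix a basis $B_X$ of $M|X$, apply (IM) to a maximal $I\supseteq B_X$ inside $E-e$, and hope that $I$ is a basis of $M$; but this runs into the case where $B_X\cup(C-e)$ is dependent in $M$, which forces an extra appeal to the circuit-elimination axiom (C3). The duality route sidesteps this entirely and is the cleanest presentation.
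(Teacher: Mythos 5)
Your proof is correct and follows essentially the same route as the paper's: both reduce the statement to the observation that $\{e\}$ would otherwise be a co-circuit of $M/X$, hence (by the identity $(M/X)^*=M^*|(E\sm X)$) a co-circuit of $M$, which is incompatible with $e$ lying on a circuit of $M$. The only cosmetic difference is that the paper obtains the final contradiction by citing Lemma~\ref{cir2} directly, whereas you re-derive the equivalence ``$e$ lies on a circuit iff $\{e\}$ is not a co-circuit'' inline.
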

\begin{proof}
Let $e$ be contained in a circuit $C$ of $M$, and suppose that $e$
does not lie in any circuit of $M/X$. Then $\{e\}$ is a  a co-circuit of $M/X$,
and thus also  
a co-circuit of $M$. This, however, contradicts Lemma~\ref{cir2} since the circuit 
$C$ intersects the co-circuit 
$\{e\}$ in exactly one element.
\end{proof}

\begin{proof}[Proof of Lemma~\ref{lem:equiv}]
Symmetry and reflexivity are immediate.  To see transitivity, let $e$, $f$, and $g$ 
in $E$ be given such that $e,f$ lie in a common circuit $C_1$, and 
$f,g$ are contained in a circuit $C_2$.  
We will find a subset $X$ of the ground set such that $M/X$ contains 
a circuit containing both $e$ and $g$.  By Lemma~\ref{cir3},
this will suffice to prove the claim.

\medskip

First, we claim that without loss of generality we may assume that
\begin{equation}\label{simpleworld}
E(M)=C_1\cup C_2\emtext{ and }C_1\cap C_2=\{f\}.
\end{equation}
Indeed, as any circuit in any restriction of $M$ is still a circuit
of $M$, we may delete any element outside $C_1\cup C_2$. Moreover,
we may contract  $(C_1 \cap C_2) - f$.  
Then $(C_1 \sm C_2) + f$ is a circuit containing $e$ and $f$, and similarly, $(C_2 \sm C_1) +f$ 
is a circuit containing both $f$ and $g$ by Lemma~\ref{propA}.
Any circuit $C$ with $e,g\in C$ in $M/((C_1\cap C_2)-f)$ will extend to 
a circuit in $M$, by Lemma~\ref{cir3}.  Hence, we may assume~\eqref{simpleworld}.

Next, we attempt to contract the set $C_2 \sm \{f, g\}$.  
If $C_1$ is a circuit of $M/(C_2 \sm \{f, g\})$, 
then we can find a circuit containing both $e$ and $g$ by applying the circuit 
exchange axiom $(C2)$ to the circuit $C_1$ and the circuit $\{f, g\}$.  Thus 
we may assume that $C_1$ is not a circuit, 
but by Lemma~\ref{propB}, 
it contains a circuit $C_3$ containing the element $e$.  If the circuit $C_3$ 
also contains the element $f$, then again by the circuit exchange axiom, 
we can 
find a circuit containing both $e$ and $g$.  Therefore, we instead assume that $C_3$ does 
not contain the element $f$.  Consequently, there exists a non-empty 
set $A \subseteq C_2 \sm \{f, g\}$ such that 
$C_3 \cup A$ is a circuit of $M$.  

Contract the set $C_2 \sm (\{f, g\} \cup A)$.  We claim that the set $C_3 \cup A$ is a circuit of 
the contraction.  If not, there exist  sets $D \subseteq C_3$ and $B \subseteq A$ such that 
$D \cup B$ is a circuit of $M /( C_2 \sm (\{f, g\} \cup A))$.  
Furthermore, $D \cup B \cup X$ is a circuit of $M$ for some set 
$X \subseteq C_2 \sm (\{f, g\} \cup A)$.   This implies that 
$D = C_3$, since $D$ contains a circuit of $M/(C_2 \sm \{f, g\})$.  If $A \neq B$, we apply the circuit 
exchange axiom to the two circuits $C_3 \cup A$ and $C_3 \cup B \cup X$ to find a circuit contained 
in their union that does not contain the element $e$.  However, the existence of such a circuit is a 
contradiction.  Either it would be contained as a strict subset of $C_2$, or upon contracting 
$C_2 \sm \{f,g\}$ we would have a circuit contained as a strict subset of $C_3$.  This final contradiction 
shows that $C_3 \cup A$ is a circuit of $M/\left(C_2 \sm (A \cup \{f, g\})\right)$.  

We now consider two circuits in $M/\left(C_2 \sm (A \cup \{f, g\})\right)$.  
The first is 
$C_1':=C_3 \cup A$, which contains $e$.  
The second is $C_2':=\{f, g\} \cup A$, the remainder of $C_2$ 
after contracting $C_2 \sm (A \cup \{f, g\})$ (note Lemma~\ref{propA}).  
We have shown that in attempting to find a circuit 
containing $e$ and $g$ utilising two circuits $C_1$ containing $e$ and $C_2$ containing $g$, 
we can restrict our attention to the case when $C_2 \sm C_1$ consists of exactly two elements.  The 
argument was symmetric, so in fact we may assume that $C_1 \sm C_2$ also consists of only 
two elements.  In~\eqref{simpleworld} we observed that we may assume that $C_1$ and $C_2$ 
intersect in exactly one element.  Thus we have reduced to a matroid 
on five elements, in which it is easy to find a circuit containing both $e$ and $g$.
\end{proof}


\medskip
Let the equivalence classes of the relation $\sim$ be the \emph{connected components} of a matroid $M$.  

As an application of Lemma~\ref{lem:equiv} we shall show that
every matroid is the direct sum of its connected components. With 
a little extra effort this will allow us to re-prove a characterisation
matroids that are both finitary and co-finitary, that had been 
noted  by Las Vergnas~\cite{LasVergnas71}, 
and by Bean~\cite{Bean76} before.

Let $M_i=(E_i,\mathcal I_i)$ 
be a collection of matroids indexed by a set $I$.
We define 
the \emph{direct sum} of the $M_i$, written $\bigoplus_{i \in I} M_i$, to have ground set consisting 
of $E:=\bigcup_{i \in I} E_i$ and independent sets $\ind = \left \{\bigcup_{i \in I} J_i: J_i \in \ind_i\right\}$.  

As noted by Oxley \cite{Oxley92} for finitary matroids, it is easy to check that:
\begin{lemma}
The direct sum of matroids $M_i$ for $i \in I$ is  a matroid.
\end{lemma}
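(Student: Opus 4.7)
The plan is to verify the four matroid axioms (I1), (I2), (I3), and (IM) directly, working from the description of $\ind$ as unions of independent sets across the components. The key preliminary observation, which I would state and verify first, is a ``componentwise'' description of independence: a set $J \subseteq E$ lies in $\ind$ if and only if $J \cap E_i \in \ind_i$ for every $i \in I$. The nontrivial direction follows because if $J = \bigcup_i J_i$ with $J_i \in \ind_i$, then $J \cap E_i \subseteq J_i$, and then (I2) applied in each $M_i$ shows $J \cap E_i \in \ind_i$. This simultaneously gives (I2) for the sum and replaces the slightly awkward ``union'' definition by the cleaner ``slicewise'' one, which I will use throughout.

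With that in hand, (I1) is immediate since $\emptyset \cap E_i = \emptyset \in \ind_i$. For (I3), the main point I would establish is a second componentwise statement: a set $J \in \ind$ is in $\Imax$ of $\bigoplus M_i$ if and only if $J \cap E_i \in \Imax$ of $M_i$ for every $i$. The ``if'' direction is clear from the slicewise description; for ``only if'', if some slice $J \cap E_j$ failed to be maximal, then (IM) for $M_j$ (or a direct application of (I3) in $M_j$ against any basis of $M_j$) would produce an element to add, contradicting maximality of $J$. Once this is in place, (I3) for the sum follows immediately: given $I \in \ind \setminus \Imax$ and $I' \in \Imax$, choose an index $j$ with $I \cap E_j \notin \Imax$ of $M_j$; since $I' \cap E_j \in \Imax$ of $M_j$, axiom (I3) for $M_j$ delivers $x \in (I' \cap E_j) \setminus (I \cap E_j)$ with $(I \cap E_j) + x \in \ind_j$, and then $I + x$ still satisfies the slicewise criterion.

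For (IM), given $I \in \ind$ and $I \subseteq X \subseteq E$, I would apply (IM) in each coordinate: for each $i$, the set $I \cap E_i$ lies in $\ind_i$ with $I \cap E_i \subseteq X \cap E_i \subseteq E_i$, so there is a $\subseteq$-maximal $J_i \in \ind_i$ with $I \cap E_i \subseteq J_i \subseteq X \cap E_i$. Setting $J := \bigcup_i J_i$, the slicewise description puts $J$ in $\ind$, and $I \subseteq J \subseteq X$. Maximality of $J$ among independent sets sandwiched between $I$ and $X$ follows because any strictly larger such $J'$ would satisfy $J' \cap E_k \supsetneq J_k$ for some $k$, and by slicewise independence $J' \cap E_k \in \ind_k$ with $I \cap E_k \subseteq J' \cap E_k \subseteq X \cap E_k$, contradicting the maximal choice of $J_k$.

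The only real subtlety, and the step I would double-check, is the equivalence ``$J \in \Imax \Leftrightarrow$ each $J \cap E_i \in \Imax$ of $M_i$''; this relies on the freedom to augment in a single component without disturbing the others, which is exactly what the slicewise description permits. Everything else is bookkeeping with unions and intersections across the index set $I$, and nothing about the argument requires $I$ or the $E_i$ to be finite.
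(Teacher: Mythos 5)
Your proof is correct and complete. Note that the paper does not actually supply an argument here---it simply records the lemma as ``easy to check,'' citing Oxley's observation for finitary matroids---so there is nothing to compare against; your direct verification of (I1), (I2), (I3) and (IM) via the slicewise characterisation of independence and of maximality is exactly the standard argument one would write out. The only point worth making explicit is that the slicewise description (and the step ``$x\in I'\sm I$'' in your verification of (I3)) uses that the ground sets $E_i$ are pairwise disjoint; this is implicit in the definition of the direct sum, but since the paper only writes $E=\bigcup_{i\in I}E_i$ you should state the disjointness assumption when you invoke $J\cap E_i=J_i$.
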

%


\begin{lemma}\label{lem:fin1}
Every matroid is the direct sum of the restrictions to its connected components.
\end{lemma}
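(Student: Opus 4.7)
The plan is to reduce the statement to checking, on the level of independent sets, that a set is independent in $M$ precisely when each of its intersections with a connected component is independent. Write $(E_i)_{i\in I}$ for the connected components of $M$, so that $E=\bigcup_{i\in I}E_i$ is a partition (Lemma~\ref{lem:equiv}). By the definition of the direct sum and the fact that the $E_i$ are disjoint, a set $J\subseteq E$ is independent in $\bigoplus_{i\in I}(M|E_i)$ if and only if $J\cap E_i$ is independent in $M|E_i$ for every $i$, equivalently, if and only if $J\cap E_i\in\mathcal I(M)$ for every $i$. So I have to prove that $J\in\mathcal I(M)$ iff $J\cap E_i\in\mathcal I(M)$ for all $i\in I$.

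The forward direction is immediate from (I2), since each $J\cap E_i$ is a subset of $J$. For the converse I argue contrapositively: if $J\notin\mathcal I(M)$, then $J$ contains a circuit $C$. Any two distinct elements of $C$ lie together in the circuit $C$ and are therefore related by $\sim$, hence belong to the same connected component. Consequently $C\subseteq E_i$ for some $i$, and then $C\subseteq J\cap E_i$ witnesses that $J\cap E_i$ is dependent in $M$. This yields the desired implication.

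I do not expect a real obstacle: once Lemma~\ref{lem:equiv} is in place, the only content is the observation that every circuit is contained in a single connected component, which is essentially the definition of $\sim$. The one minor point to be careful about is the convention that reflexivity of $\sim$ (and hence the partition property) takes care of elements not lying in any circuit, so that each such element forms its own component and the corresponding restriction contributes nothing beyond the trivial independent set.
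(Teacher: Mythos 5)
Your proof is correct and follows essentially the same route as the paper: the forward direction via closure under subsets, and the converse by noting that any circuit of a dependent set lies within a single equivalence class of $\sim$, hence within a single component. The remark about elements not lying in any circuit is a reasonable extra precaution but adds nothing beyond what the paper's argument already implicitly handles.
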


\begin{proof}
Let $M=(E,\mathcal I)$ be a matroid. As $\sim$ is an equivalence relation,
the ground set $E$ partitions into connected components $E_i$, for some
index set $I$. Setting $M_i:=M|E_i$, we claim that $\bigoplus_{i \in I} M_i$ 
and $M$ have the same independent sets.

Clearly, if $I$ is independent in $M$, then $I\cap E_i$ is independent in $M_i$
for every $i\in I$, which implies that $I$ is independent in $\bigoplus_{i \in I} M_i$.
Conversely, consider a set $X\subseteq E$ that is dependent in $M$. 
Then, $X$ contains a circuit $C$, which, in turn, lies in $E_j$ for some $j\in I$.
Therefore, $X\cap E_j$ is dependent, implying that $X$ is dependent in 
$\bigoplus_{i \in I} M_i$ as well.
\end{proof}

We now give the characterisation of matroids that are both finitary and co-finitary.  
\begin{theorem}\label{thm:fin} 
A matroid $M$ is both finitary and co-finitary if 
and only if there exists an index set $I$ and finite matroids $M_i$ for $i \in I$ 
such that
${M} = \bigoplus_{i \in I}M_i.$
\end{theorem}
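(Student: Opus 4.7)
The easy direction is immediate: if $M = \bigoplus_{i \in I} M_i$ with each $M_i$ finite, then every circuit and every co-circuit of $M$ lies in a single summand and is thus finite, so $M$ is both finitary and co-finitary. For the converse, Lemma~\ref{lem:fin1} decomposes $M$ as the direct sum of its restrictions $M|E_i$ to the connected components $E_i$; each restriction remains finitary (its circuits are $M$-circuits inside $E_i$) and co-finitary, because $(M|E_i)^* = M^*/(E\setminus E_i)$ and Lemma~\ref{cir3} shows that contractions preserve finitarity. The theorem therefore reduces to the claim that a \emph{connected}, finitary, co-finitary matroid $M$ is finite.

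To attack this claim I fix a basis $B$ of $M$ and consider the bipartite fundamental graph $G_B$ on vertex classes $B$ and $E\setminus B$, in which $e \in B$ and $f \in E\setminus B$ are adjacent exactly when $e \in C(f, B)$---equivalently, $f \in C^*(e, B)$. Finitarity bounds each $|C(f,B)|$ and co-finitarity bounds each $|C^*(e,B)|$, so $G_B$ is locally finite. Connectedness of $M$ translates into connectedness of $G_B$: given any circuit $C$, I apply (C3) to the family $(C(g,B))_{g \in C\setminus B}$---which satisfies the required intersection pattern inside $C\setminus B$---to conclude that every $h \in C\cap B$ must lie in some $C(g,B)$, since otherwise (C3) would produce a circuit entirely inside the independent set $B$; an induction on $|C\setminus B|$ using weak elimination then places all of $C$ in a single component of $G_B$.

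If $B$ is finite, the inclusion $E\setminus B \subseteq \bigcup_{e\in B}(C^*(e,B)-e)$ exhibits $E\setminus B$ as a finite union of finite sets, so $|E|$ is finite; the dual argument handles the case where $E\setminus B$ is finite. The main obstacle is the remaining case, where both $|B|$ and $|E\setminus B|$ are infinite. Here $G_B$ is infinite, locally finite, and connected, so by K\"onig's lemma it contains a ray $v_0, v_1, v_2, \ldots$ with $v_{2k}\in B$, $v_{2k+1}\in E\setminus B$, and $\{v_{2k}, v_{2k+2}\} \subseteq C(v_{2k+1}, B)$ for every $k$. I plan to exploit this ray by performing iterated basis exchanges $B \rightsquigarrow B - v_0 + v_1 \rightsquigarrow (B - v_0 + v_1) - v_2 + v_3 \rightsquigarrow \cdots$, producing a sequence of bases $B_n$, and then invoking (I4) to pass to a limit set $B_\infty$ whose structure contradicts either (IM) or the intersection constraint of Lemma~\ref{cir2}. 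Making each individual exchange legitimate requires tracking how $C(v_{2k+1}, B_k)$ evolves as the basis changes---strong circuit elimination is the natural tool for keeping $v_{2k}$ in the updated circuit---and this rigorous passage to the limit is the technical heart of the proof.
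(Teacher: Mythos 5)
Your reduction to the connected case is sound and matches the paper's: the easy direction, the decomposition via Lemma~\ref{lem:fin1}, and the observation that each component inherits finitarity and co-finitarity (via Lemma~\ref{cir3} applied to $M^*$) are all fine. The paper packages the remaining step as Lemma~\ref{lem:fin2} (an infinite connected matroid has an infinite circuit or an infinite co-circuit) and proves it by fixing an element $e$, taking infinitely many finite circuits through $e$, and running a pigeonhole argument along an enumeration to build a set $X$ that must be dependent --- a finite co-circuit meeting $X$ exactly once would also meet some surviving circuit exactly once, contradicting Lemma~\ref{cir2} --- and yet cannot contain a finite circuit, since any such circuit would be properly contained in infinitely many others. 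Your fundamental-graph route is genuinely different; the cases where $B$ or $E\setminus B$ is finite do work (no loops or coloops in a connected matroid with at least two elements), but the case where both sides are infinite --- the only case that matters --- is not proved. You explicitly defer ``the technical heart'', and that deferral hides a real obstruction.

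Concretely: to make even the second exchange legitimate you need $v_2\in C(v_3,B_1)$ where $B_1=B-v_0+v_1$. If $v_0\in C(v_3,B)$, then $C(v_3,B_1)$ arises by eliminating $v_0$ from $C(v_3,B)$ and $C(v_1,B)$; strong elimination lets you retain the single element $v_3$, but gives no control over whether $v_2$ survives, since $v_2$ may lie in both circuits. So the iterated exchange can stall at step two, and neither (C3) nor anything else in the paper repairs this. Even granting that all $B_n$ are bases, the endgame is unspecified: (I4) does yield independence of $B_\infty$ from independence of the $B_n$, but (IM) is an axiom and cannot be ``contradicted'', and $B_\infty$ failing to be a basis is not by itself absurd --- limits of bases need not be bases. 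To finish you would have to exhibit a concrete finite circuit or co-circuit intersected exactly once (to use Lemma~\ref{cir2}), or a circuit properly contained in another, and the proposal never does so. A smaller issue: the equivalence $e\in C(f,B)\Leftrightarrow f\in C^*(e,B)$, which you use both for local finiteness and for the adjacency in $G_B$, is true for infinite matroids but is not stated in the paper and needs a short proof. I would recommend replacing the ray-and-exchange argument by the paper's compactness construction, which never changes the basis.
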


Theorem \ref{thm:fin} is a direct consequence of the following lemma,
which has previously been proved by Bean~\cite{Bean76}.
The theorem was first proved by Las Vergnas~\cite{LasVergnas71}.
Our proof is different from the proofs of Las Vergnas and of Bean.

\begin{lemma}\label{lem:fin2}
An infinite, connected matroid contains 
either an infinite circuit or an infinite co-circuit.
\end{lemma}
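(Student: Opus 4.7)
The plan is to assume that $M$ is infinite and connected but has every circuit and every cocircuit finite, and to derive a contradiction. Fix a basis $B$ of $M$; since $M$ is connected and $|E|\ge 2$ it has no loops or coloops, so both $B$ and $E\setminus B$ are nonempty. Define the bipartite ``fundamental incidence'' graph $\mathcal{H}$ with parts $B$ and $E\setminus B$, joining $b\in B$ to $x\in E\setminus B$ precisely when $b\in C_x$ (equivalently, by the standard correspondence between fundamental circuits and fundamental cocircuits, when $x\in D_b$). Finiteness of every $C_x$ and every $D_b$ makes $\mathcal{H}$ locally finite.

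I would first show $\mathcal{H}$ is connected: by Lemma~\ref{lem:equiv} any two elements of $E$ lie in a common circuit $C$, and (C3) lets one peel off the non-basis elements of $C$ one at a time through fundamental circuits, yielding an $\mathcal{H}$-path between them. A finite $\mathcal{H}$ would force $E$ to be finite, so $\mathcal{H}$ is infinite, locally finite and connected. Choosing any vertex and running a BFS, the tree is locally finite and infinite, so K\"onig's infinity lemma gives an infinite geodesic ray in $\mathcal{H}$, which is automatically induced because in the BFS layering edges of $\mathcal{H}$ only go between consecutive levels. Hence I may take an induced infinite ray $b_0,x_1,b_1,x_2,b_2,\ldots$; in particular $C_{x_i}\cap\{b_0,b_1,\ldots\}=\{b_{i-1},b_i\}$ for every $i\ge 1$.

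Inducedness drives a clean sequence of fundamental basis swaps. Set $B_n=B\setminus\{b_0,\ldots,b_{n-1}\}\cup\{x_1,\ldots,x_n\}$; inductively, because $C_{x_n}$ avoids $\{b_0,\ldots,b_{n-2}\}$, it remains the fundamental circuit of $x_n$ with respect to $B_{n-1}$ and therefore contains $b_{n-1}$, so the swap $b_{n-1}\mapsto x_n$ is legitimate and $B_n$ is a basis. The ``limit'' $B_\infty=B\setminus\{b_0,b_1,\ldots\}\cup\{x_1,x_2,\ldots\}$ is then independent by finitarity, since every finite subset lies in some $B_n$.

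The main obstacle is the last step: extracting a genuine infinite circuit or cocircuit from this limit. I would split into cases on whether $B_\infty$ spans $M$. If $B_\infty$ is a basis, the finite fundamental circuit of $b_0$ in $B_\infty$ can involve only finitely many of the $x_j$, while the finite fundamental cocircuits of the $x_j$ in $B_\infty$ supplied by cofinitarity must nevertheless respect the infinite induced tail of the ray; combining this with Lemma~\ref{cir2} should force one such finite structure to meet all but finitely many of the $x_j$, a contradiction. If $B_\infty$ is not spanning, an element outside its closure yields, via the dual of the same fundamental analysis, an infinite cocircuit in $M$. In either case the assumption that every circuit and every cocircuit is finite fails. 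The delicate part is the bookkeeping at infinity: one must invoke (CM) and iterated applications of (C3) (and its dual) to verify that the candidate infinite sets produced by the case analysis really are circuits or cocircuits, rather than mere set-theoretic unions of finite ones.
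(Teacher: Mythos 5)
Your construction gets as far as a locally finite, connected fundamental graph, an induced ray $b_0,x_1,b_1,x_2,\dots$, and the independent limit set $B_\infty$, and up to that point the steps are plausible (though the connectedness of the fundamental graph of a connected infinite matroid is itself a nontrivial claim you would have to prove, not just assert via ``peeling off'' with (C3)). The genuine gap is the final step, which you yourself flag as ``the main obstacle'': no contradiction is actually derived. Suppose $B_\infty$ is a basis and let $C_0$ be the finite fundamental circuit of $b_0$ with respect to $B_\infty$ and $D_j$ the finite fundamental cocircuit of $x_j$. The only constraints available are $x_j\in C_0\Leftrightarrow b_0\in D_j$ and, from Lemma~\ref{cir2}, $|C_0\cap D_j|\neq 1$. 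Since $C_0$ is finite, all but finitely many $j$ have $x_j\notin C_0$, and for those $j$ the intersection $C_0\cap D_j$ may simply be empty --- nothing ``forces one such finite structure to meet all but finitely many of the $x_j$.'' The same problem afflicts the non-spanning case: extending $B_\infty$ to a basis produces only more finite fundamental (co)circuits, with no visible contradiction. The induced ray and the exchange sequence, as assembled, are consistent with all these objects being finite; an additional idea is needed, and it is precisely the idea you defer to ``delicate bookkeeping at infinity.''

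For comparison, the paper avoids the fundamental graph entirely. It fixes one element $e$, takes infinitely many distinct (finite) circuits through $e$, and runs a diagonal thinning: enumerating the countable union of these circuits as $e_1,e_2,\dots$, it keeps at stage $k$ either the infinitely many circuits containing $e_{k+1}$ (adding $e_{k+1}$ to a set $X$) or the infinitely many avoiding it. The limit set $X$ is then shown to be dependent --- a cocircuit meeting $X$ in one element would be finite, hence contained in some $\{e_1,\dots,e_k\}$, and would meet every circuit of $\mathcal C_k$ in exactly one element, contradicting Lemma~\ref{cir2} --- yet $X$ cannot contain a finite circuit $C'$, because $C'\subseteq X_\ell$ for some $\ell$ and infinitely many genuine circuits contain $X_\ell$, violating (C2). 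That is the mechanism that converts ``all circuits and cocircuits finite'' into a contradiction; your ray-based setup would need an analogous compactness argument, or a different one, to close.
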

\begin{proof}
Assume, to reach a contradiction, that $M$ is a connected matroid with 
$|E(M)| = \infty$ such that every circuit and every co-circuit of $M$ 
is finite.  
Fix an element $e \in E(M)$ and let $C_1, C_2, C_3, \dots$ be an infinite 
sequence of distinct circuits each containing $e$.  Let $M' = M|
\left( \bigcup_{i=1}^\infty C_i\right)$ be 
the restriction of $M$ to the union of all the circuits $C_i$.  Note that $M'$
contains a countable number of elements by our assumption that every circuit is finite.  Let $e_1, e_2, 
\dots$ be an enumeration of $E(M')$ such that $e_1 = e$.  We now recursively 
define an infinite  set $\mathcal C_i$ 
of circuits and a finite set $X_i$ for $i \ge 1$. 
Let $\mathcal{C}_1 = \{C_i : i \ge 1\}$ 
and $X_1 = \{e_1\}$.  
Assuming $\mathcal C_i$ and $X_i$ are defined for $i = 1, 2, \dots, k$, we define $\mathcal{C}_{k+1}$ 
as follows. If infinitely many circuits in $\mathcal C_k$ contain $e_{k+1}$,
we let $\mathcal{C}_{k+1} = 
\{C \in \mathcal{C}_k : e_{k+1} \in C\}$, and $X_{k+1} = X_k \cup \{e_{k+1}\}$.  
Otherwise we set $\mathcal{C}_{k+1} = 
\{C \in \mathcal{C}_k : e_{k+1} \notin C\}$ and $X_{k+1} = X_k$.  
Let $X = \bigcup_1^\infty X_i$.  Note that $\mathcal C_k$ is always 
an infinite set, 
and for all $i$, $j$, $i <j$, if $e_i \in X_i$, then 
$e_i \in C$ for all circuits $C \in \mathcal{C}_j$.

We claim that the set $X$ is dependent in $M'$ .  
By Lemma~\ref{cir2}, if $X$ is independent then there is a co-circuit $D$ of $M'$
that meets $X$ in exactly one element.
As $D$ is finite, we may pick an integer $k$
such that $D\subseteq\{e_1,\ldots,e_k\}$.  
Choose any $C\in\mathcal C_k$.
Since 
$C \cap \{e_1, e_2, e_3, \dots, e_k\} = X_k$, we see that $C$ also intersects $D$ in exactly one 
element, a contradiction to Lemma~\ref{cir2}.  
Thus, $X$ is dependent and therefore contains a circuit $C'$.
As $M$ is finitary, 
$C'$ contains a finite number of elements, and so $C' \subseteq X_\ell$ for 
some integer $\ell$.  
However, $\mathcal{C}_\ell$ contains an infinite number of circuits, each containing the set $X_\ell$.  
It follows that some circuit strictly contains $C'$, a contradiction.  
\end{proof}

\begin{proof} [Proof:  Theorem \ref{thm:fin}]
If we let $\mathcal{C}(M)$ be the set of circuits of a matroid $M$, 
an immediate consequence of the definition of the direct sum is that $\mathcal{C}\left( 
\bigoplus_{i \in I } M_i\right) = \bigcup_{i \in I}\mathcal{C}(M_i)$.
Moreover, the dual version of Lemma~\ref{cir2}
shows that every co-circuit is completely contained in some $M_i$.
It follows that if $M = \bigoplus_{i \in I}M_i$ where $M_i$ is finite for all $i \in I$, then $M$ 
is both finitary and co-finitary.  

To prove the other direction of the claim, let $M$ be a matroid 
that is both finitary and co-finitary.  
Let $M_i$ for $i \in I$ be the restriction of $M$ to the connected 
components.  For every $i \in I$, the matroid $M_i$ is connected, 
so by our assumptions on $M$ and by Lemma \ref{lem:fin2}, $M_i$ 
must be a finite matroid.  Lemma \ref{lem:fin1} implies that 
$M = \bigoplus_{i \in I}M_i$, and the theorem is proved.
 
\end{proof}

\section{Higher connectivity}\label{sec:highcon}

Let us recapitulate the definition of $k$-connectivity in finite 
matroids and see what we can keep of that in infinite matroids.
Let $M$ be a finite matroid on a ground set $E$. 
If $r_M$ denotes the rank function then 
the \emph{connectivity function}
$\cf$ is defined as 
\begin{equation}
\label{connfun}
\cf_M(X):=r_M(X)+r_M(E\sm X)-r_M(E)\text{ for }X\subseteq E.
\end{equation}
(We note that some authors define a connectivity function 
$\lambda$ by $\lambda(X)=k(X)+1$. In dropping the $+1$ we 
follow Oxley~\cite{OxleyBook92}.)
We call
a partition $(X,Y)$ of $E$ a \emph{$k$-separation} if 
$\cf_M(X)\leq k-1$ and $|X|,|Y|\geq k$.
The matroid $M$ is \emph{$k$-connected} if there exists no $\ell$-separation
with $\ell<k$.

Of these notions only the connectivity function is obviously useless
in an infinite matroid, as
the involved ranks will usually be infinite. We shall therefore 
only redefine $\cf$ and leave the other definitions unchanged. 
For this we have two aims. First, the new $\cf$ should coincide
with the ordinary connectivity function if the matroid is finite.
Second, $\cf$ should be consistent with connectivity as defined in
the previous section.

Our goal is to find a rank-free formulation of~\eqref{connfun}.
Observe that~\eqref{connfun} can be interpreted as the number
of elements we need to delete from the union of a 
basis of $M|X$ and a basis of $M|(E\sm X)$ in order to obtain a 
basis of the whole matroid. To show that this number does not depend
on the choice of bases is the main purpose of the next lemma.

\medskip

Let $M=(E,\mathcal I)$ be a matroid, and and let 
$I,J$ be two independent sets. We define
\[
\del_M(I,J):=\min\{ |F|:F\subseteq I\cup J,\, 
(I\cup J)\sm F\in\mathcal I\},
\]
where we set $\del_M(I,J)=\infty$ if there is no such finite set $F$.
Thus, $\del_M(I,J)$ is either a non-negative integer or infinity.
If there is no chance of confusion,
we will simply write $\del(I,J)$ rather than 
$\del_M(I,J)$

\begin{lemma}
\label{lem:del}
Let $M=(E,\mathcal I)$ be a matroid, let $(X,Y)$ be 
a partition of  $E$,
and let $B_X$ be a basis of $M|X$ and $B_Y$ a basis of $M|Y$.
Then
\begin{enumerate}[\rm (i)]
\item $\del(B_X,B_Y)=|F|$ for any $F\subseteq B_X\cup B_Y$
so that $(B_X\cup B_Y)\sm F$ is a basis of~$M$.
\item $\del(B_X,B_Y)=|F|$ for any $F\subseteq B_X$
so that $(B_X\sm F)\cup B_Y$ is a basis of~$M$.
\item $\del(B_X,B_Y)=\del(B'_X,B'_Y)$ for 
every basis $B'_X$ of $M|X$ and basis $B'_Y$ of $M|Y$.
\end{enumerate}
\end{lemma}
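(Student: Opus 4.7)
The plan is to first show that $B_X\cup B_Y$ contains a basis of $M$---indeed, one extending any prescribed independent subset---and then to deduce (i) from Lemma~\ref{lem:basediff} applied in $M$, (ii) directly from (i), and (iii) from (ii) by applying Lemma~\ref{lem:basediff} in the contraction $M.X$ via Lemma~\ref{lem:contrbase}. For the existence claim, given an independent $I\subseteq B_X\cup B_Y$, I would use (IM) to obtain a $\subseteq$-maximal independent $B$ with $I\subseteq B\subseteq B_X\cup B_Y$, and then show $B\in\Imax$. If not, (I3) provides $e\in E\sm B$ with $B+e\in\I$; maximality of $B$ inside $B_X\cup B_Y$ forces $e\notin B_X\cup B_Y$, so without loss of generality $e\in X\sm B_X$, and the fundamental circuit $D$ of $e$ with respect to $B_X$ lies in $B_X+e$. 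For each $x\in X':=D\cap(B_X\sm B)$, the set $B+x\subseteq B_X\cup B_Y$ is dependent and hence contains a circuit $C_x\ni x$ with $C_x-x\subseteq B$; since $C_y-y\subseteq B$ is disjoint from $X'$, the family $(C_x:x\in X')$ meets the hypothesis of~(C3) for $D$, and because $C_x\subseteq B_X\cup B_Y$ cannot contain $e\notin B_X\cup B_Y$, applying~(C3) with $z=e$ yields a circuit through $e$ contained in $B+e$, contradicting $B+e\in\I$.

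With the existence claim in hand, parts (i) and (ii) are short. For (i), if $F\subseteq B_X\cup B_Y$ and $B:=(B_X\cup B_Y)\sm F$ is a basis of $M$, then trivially $\del(B_X,B_Y)\le|F|$; for the reverse, take any $F^*$ attaining the minimum in the definition of $\del$, extend $(B_X\cup B_Y)\sm F^*$ to a basis $B''\subseteq B_X\cup B_Y$ of $M$ via the existence step, and set $F'':=(B_X\cup B_Y)\sm B''\subseteq F^*$. A direct calculation gives $B\sm B''=F''\sm F$ and $B''\sm B=F\sm F''$, and applying Lemma~\ref{lem:basediff} to the bases $B,B''$ of $M$, together with a brief finite-vs-infinite case check, yields $|F|=|F''|\le|F^*|=\del(B_X,B_Y)$. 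Part (ii) is then immediate: since $B_X\cap B_Y=\emptyset$, $(B_X\sm F)\cup B_Y=(B_X\cup B_Y)\sm F$ whenever $F\subseteq B_X$, and existence is furnished by the existence step applied with $I=B_Y$.

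For (iii), the symmetry $\del(I,J)=\del(J,I)$ reduces matters to proving $\del(B_X,B_Y)=\del(B_X,B_Y')$ for any two bases $B_Y,B_Y'$ of $M|Y$; iterating on the other coordinate then completes (iii). Using (ii), I would pick $F,F''\subseteq B_X$ such that $(B_X\sm F)\cup B_Y$ and $(B_X\sm F'')\cup B_Y'$ are bases of $M$, with $|F|=\del(B_X,B_Y)$ and $|F''|=\del(B_X,B_Y')$. By Lemma~\ref{lem:contrbase}, both $B_X\sm F$ and $B_X\sm F''$ are then bases of $M.X$; the set differences $(B_X\sm F)\sm(B_X\sm F'')$ and $(B_X\sm F'')\sm(B_X\sm F)$ evaluate to $F''\sm F$ and $F\sm F''$, so Lemma~\ref{lem:basediff} applied in $M.X$, with the same finite-vs-infinite bookkeeping as in (i), gives $|F|=|F''|$. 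The main obstacle throughout is the existence step---extracting a basis of $M$ from $B_X\cup B_Y$ is where the full strength of~(C3) is needed, since a maximal independent subset of a spanning set is not a priori a basis in the infinite setting; once this is in place, (i)--(iii) reduce to repeated appeals to Lemma~\ref{lem:basediff}.
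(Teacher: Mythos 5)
Your proof is correct, and for the heart of the lemma it follows the same route as the paper: all three parts are reduced to Lemma~\ref{lem:basediff}, with Lemma~\ref{lem:contrbase} supplying the transfer between different bases of $M|Y$ in part~(iii). The genuine difference is your explicit existence step. The paper never proves that $B_X\cup B_Y$ contains a basis of $M$: in~(i) it takes the minimum-cardinality $F'$ with $(B_X\cup B_Y)\sm F'$ independent and asserts that a maximal independent subset of $B_X\cup B_Y$ is a basis (which is justified by~(I3) only because the hypothesis of~(i) already posits a basis inside $B_X\cup B_Y$), and in~(iii) it simply takes ``$F$ as in~(ii)'' without arguing that such an $F$ exists. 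Your (C3)-based argument --- extending a maximal independent $B\subseteq B_X\cup B_Y$ would-be counterexample by an element $e\notin B_X\cup B_Y$, routing the fundamental circuit of $e$ in $B_X+e$ through the fundamental circuits $C_x\subseteq B+x$, and eliminating $X'=D\cap(B_X\sm B)$ to land a circuit inside $B+e$ --- closes exactly this gap, and it is sound: the family $(C_x)$ satisfies the hypothesis of~(C3) because $C_y-y\subseteq B$ is disjoint from $X'$, and $e$ avoids every $C_x\subseteq B_X\cup B_Y$. The cost is that you invoke the full strength of~(C3), whereas the paper's version of~(iii) is slightly leaner: it uses the (ii)$\Leftrightarrow$(iii) equivalence of Lemma~\ref{lem:contrbase} to carry the \emph{same} set $F$ over to $B'_Y$ in one step, while you re-run Lemma~\ref{lem:basediff} inside $M.X$ to compare two a priori different sets $F$ and $F''$. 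Both are valid; yours is the more self-contained account.
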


\begin{proof}
Let us first prove that 
\begin{equation}\label{eq1}
\begin{minipage}[c]{0.8\textwidth}\em
if for $F_1,F_2\subseteq B_X\cup B_Y$ it holds that $B_i:=(B_X\cup B_Y)\sm F_i$ is 
a basis of $M$ ($i=1,2$), then $|F_1|=|F_2|$.
\end{minipage}\ignorespacesafterend 
\end{equation} 
We may assume that one of $|F_1|$ and $|F_2|$ is finite, say $|F_2|$.
Then as $|B_1\sm B_2|= |F_2\sm F_1|<\infty$, it follows from Lemma~\ref{lem:basediff}
that $|F_1\sm F_2|=|F_2\sm F_1|$, and hence $|F_1|=|F_2|$.

(i) Let $F'\subseteq B_X\cup B_Y$ have minimal cardinality so that $(B_X\cup B_Y)\sm F'$
is independent. If $|F'|=\infty$ then, evidently, $F$ as in (i) needs to be an infinite set, too.
On the other hand, if $|F'|<\infty$ then $F'$ is also $\subseteq$-minimal, and thus
$(B_X\cup B_Y)\sm F'$ is maximally independent in $B_X\cup B_Y$,
and hence a basis of $M$. Now, $|F|=|F'|$ follows with~\eqref{eq1}.

(ii) Follows directly from~\eqref{eq1} and (i). 

(iii) Let $F\subseteq B_X$ as in (ii), i.e.\ $|F|=\del(B_X,B_Y)$. 
Because of the equivalence of (ii) and (iii)
in Lemma~\ref{lem:contrbase}, we obtain that $(B_X\sm F)\cup B'_Y$ is a 
basis of $M$ as well, and it follows that $\del(B_X,B_Y)=|F|=\del(B_X,B'_Y)$.
By exchanging the roles of $X$ and $Y$ we get then that 
$\del(B_X,B'_Y)=\del(B'_X,B'_Y)$, which finishes the proof.
\end{proof}

We now give a rank-free definition of the \emph{connectivity function}.
Let $X$ be a subset of $E(M)$ for some matroid $M$.
We pick an arbitrary basis $B$ of $M|X$, and a basis $B'$ of $M-X$
and define $\cf_M(X):=\del_M(B,B')$.
Lemma~\ref{lem:del}~(iii) ensures that $\cf$ is well defined,
i.e.\ that the value of $\cf_M(X)$ only depends on $X$ (and $M$)
and not on the choice of the bases.
The next two propositions demonstrate that $\cf$ extends
the connectivity function of a finite matroid and, furthermore, 
is consistent with connectivity defined in terms of circuits.

\begin{lemma}
If $M$ is a finite matroid on ground set $E$, and if $X\subseteq E$ then
\[
r(X)+r(E\sm X)-r(E)=\cf(X).
\]
\end{lemma}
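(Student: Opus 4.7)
The plan is to unpack the definition of $\cf(X)$ in the finite setting and reduce the claim to the standard fact that in a finite matroid any spanning set contains a basis. Pick a basis $B_X$ of $M|X$ and a basis $B_Y$ of $M|(E\sm X)$. Since $X$ and $E\sm X$ are disjoint, so are $B_X$ and $B_Y$, so $|B_X\cup B_Y|=|B_X|+|B_Y|=r(X)+r(E\sm X)$.

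Next I would observe that $B_X\cup B_Y$ spans $E$ in $M$: the closure contains $X$ (because $B_X$ is a basis of $M|X$) and contains $E\sm X$ (because $B_Y$ is a basis of $M|(E\sm X)$). In a finite matroid, any spanning set contains a basis, so there exists $F\subseteq B_X\cup B_Y$ with $(B_X\cup B_Y)\sm F$ a basis of $M$; this basis has size $r(E)$, so $|F|=r(X)+r(E\sm X)-r(E)$.

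Finally I would invoke Lemma~\ref{lem:del}(i): for any such $F$, one has $\del(B_X,B_Y)=|F|$. Since $\cf_M(X)$ was defined as $\del_M(B_X,B_Y)$ for any choice of bases (and Lemma~\ref{lem:del}(iii) guarantees this is well defined), we conclude
\[
\cf_M(X)=\del(B_X,B_Y)=|F|=r(X)+r(E\sm X)-r(E),
\]
as required.

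There is really no obstacle here: the only substantive input is Lemma~\ref{lem:del}(i), which packages exactly the statement that the minimum number of elements to delete from $B_X\cup B_Y$ to obtain an independent set equals the number to delete to obtain a basis of $M$. The rest is just using finiteness to pass between cardinalities of bases and values of the rank function.
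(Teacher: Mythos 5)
Your proof is correct and follows essentially the same route as the paper: pick bases $B_X$ of $M|X$ and $B_Y$ of $M-X$, find $F$ with $(B_X\cup B_Y)\sm F$ a basis of $M$, and apply Lemma~\ref{lem:del}(i) together with the cardinality count $|F|=|B_X|+|B_Y|-|(B_X\cup B_Y)\sm F|$. The only difference is that you spell out why such an $F$ exists (the union spans $E$), which the paper leaves implicit.
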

\begin{proof}
Let $B$ be a basis of $M|X$, $B'$ a basis of $M-X$, and choose $F\subseteq B\cup B'$
so that $(B\cup B')\sm F$ is a basis of $M$. Then
\begin{eqnarray*}
\cf(X)&=&\del(B,B')=|F|
=|B|+|B'|-|(B\cup B')\sm F|\\
&=&r(X)+r(E\sm X)-r(E).
\end{eqnarray*}
\end{proof}

\begin{lemma}
A matroid is $2$-connected if and only if it is connected.
\end{lemma}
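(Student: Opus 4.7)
The plan is to prove both directions by contraposition, each time exploiting Lemma~\ref{lem:del}~(iii), which says $\cf_M(X)$ is computed by \emph{any} pair of bases of $M|X$ and $M-X$. This flexibility is the whole point: it lets us choose bases adapted to whichever circuit or decomposition is convenient.

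For the direction "not connected implies not $2$-connected", I would invoke Lemma~\ref{lem:fin1}: if $M$ is not connected it has at least two components $E_i$. Picking one component and setting $X := E_{i_0}$, $Y := E\sm X$, the direct-sum structure gives that the union of a basis of $M|X$ and a basis of $M|Y$ is independent in $M$ (in fact a basis), so $\del(B_X,B_Y)=0$ and hence $\cf_M(X)=0$. Both $X$ and $Y$ are nonempty because each component is, so $(X,Y)$ is a $1$-separation and $M$ fails to be $2$-connected.

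For the reverse direction "not $2$-connected implies not connected", let $(X,Y)$ be a $1$-separation; since $\del$ is nonnegative we get $\cf_M(X)=0$. Pick any $x\in X$ and $y\in Y$, and suppose for contradiction that some circuit $C$ contains both. The key move is to tailor the bases to $C$: by minimality of circuits $C\cap X$ is independent in $M|X$ (else $C$ would properly contain a dependent set), and similarly for $C\cap Y$. Using (IM) extend these to bases $B'_X$ of $M|X$ and $B'_Y$ of $M|Y$. By Lemma~\ref{lem:del}~(iii), $\del(B'_X,B'_Y)=\cf_M(X)=0$, so $B'_X\cup B'_Y$ is independent in $M$; but it contains the circuit $C$, contradiction.

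The only real obstacle is conceptual — namely, bridging the rank-free definition of $\cf$ with the circuit-based definition of connectivity — and Lemma~\ref{lem:del}~(iii) does exactly this bridging by making $\cf_M(X)$ insensitive to the choice of the bases witnessing it. Once one realises this, both directions reduce to a one-line computation.
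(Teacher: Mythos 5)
Your proof is correct, and your second direction is essentially verbatim the paper's argument: given a $1$-separation $(X,Y)$ and a hypothetical circuit $C$ crossing it, tailor bases to $C\cap X$ and $C\cap Y$ via (IM) and use the basis-independence of $\del$ (Lemma~\ref{lem:del}~(iii)) to conclude that $B'_X\cup B'_Y$ is independent yet contains $C$. The only genuine difference is in the other direction. Where you invoke the direct-sum decomposition of Lemma~\ref{lem:fin1} to conclude at once that the union of bases taken across a component boundary is a basis of $M$ (so $\cf=0$), the paper argues by hand: it fixes an element $x$, lets $X$ be the set of elements sharing a circuit with $x$, and, assuming no $1$-separation exists, extracts a circuit in $B_X\cup B_Y$ meeting both sides, contradicting the definition of $X$ via Lemma~\ref{lem:equiv}. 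Both routes rest on the same nontrivial input --- that $\sim$ is an equivalence relation --- since Lemma~\ref{lem:fin1} itself depends on Lemma~\ref{lem:equiv}; yours is marginally shorter only because the crossing-circuit argument has already been absorbed into the proof of Lemma~\ref{lem:fin1}. One caveat you share with the paper: both arguments tacitly assume the components are nonempty and cover $E$, which requires the usual convention making $\sim$ reflexive even at elements (coloops) lying in no circuit; with that convention in place your argument goes through exactly as written.
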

\begin{proof}
Let $M=(E,\mathcal I)$ be a matroid.
First, assume that there is a $1$-separation $(X,Y)$ of $M$. We need
to show that $M$ cannot be connected.
Pick $x\in X$
and $y\in Y$ and suppose there is a circuit $C$ containing both, $x$ and $y$. 
Then $C\cap X$ as well as $C\cap Y$ is independent, and so there 
are bases $B_X\supseteq C\cap X$ of $M|X$ and $B_Y\supseteq C\cap Y$ of
$M|Y$, by~(IM). As $(X,Y)$ is a $1$-separation, $B_X\cup B_Y$
needs to be a basis. On the other hand, we have $C\subseteq B_X\cup B_Y$, 
a contradiction.

Conversely, assume $M$ to be $2$-connected and pick a $x\in E$.
Define $X$ to be the set of all $x'$ so that $x'$ lies in a 
common circuit with $x$. If $X=E$ then we are done, with Lemma~\ref{lem:equiv}.
So suppose that $Y:=E\sm X$ is not empty, and choose a basis $B_X$
of $M|X$ and a basis $B_Y$ of $M|Y$. Since there
are no $1$-separations of $M$, $B_X\cup B_Y$ is dependent
and thus contains a circuit $C$. But then $C$ must meet $X$ as well as $Y$,
yielding together with Lemma~\ref{lem:equiv} a contradiction to
the definition of $X$.
\end{proof}

To illustrate the definition of $\cf$ and since it is relevant 
for the open problem stated below let us consider an example.
Let $T_\infty$ be the $\omega$-regular infinite tree,
and call any edge set in $T_\infty$ \emph{independent} if 
it does not contain a double ray (a $2$-way infinite path). 
The independent sets form a matroid $MT_\infty$~\cite{infaxioms}.
(It is, in fact, not hard to directly verify the independence 
axioms.)

What is the connectivity of $MT_\infty$?
Since every two edges of $T_\infty$ are contained in a common
double ray, we see that $M$ is $2$-connected. On the other hand, 
$M$ contains a $2$-separation: deleting an edge $e$ splits 
the graph $T_\infty$
into two components $K_1,K_2$. Put $X:=E(K_1)+e$ and $Y:=E(K_2)$,
and pick a basis $B_X$ of $M|X$ and a basis $B_Y$ of $M|Y$.
Clearly, neither $B_X$ nor $B_Y$ contains a double ray, while
every double ray in $B_X\cup B_Y$ has to use $e$. Thus, 
$(B_X\cup B_Y)-e$ is a basis of $M$, and $(X,Y)$ therefore a $2$-separation.

\medskip
It is easy to construct matroids of connectivity $k$ for 
arbitrary positive integers $k$. Moreover, there are 
matroids that have infinite connectivity, namely the
uniform matroid $U_{r,k}$ where $k\simeq r/2$. 
However, it can be argued that these matroids are simply too 
small for their high connectivity, and therefore more a 
fluke of the definition than a true example of an
infinitely connected matroid. Such a matroid should certainly
have an infinite ground set. 

\begin{problem}
Find an infinite infinitely connected matroid.
\end{problem}
As for finite matroids the minimal size of a circuit or
co-circuit is an upper bound on the connectivity. So, an infinitely
connected infinite matroid cannot have finite circuits or co-circuits.
In the matroid $MT_\infty$ above all circuits and co-circuits are infinite.
Nevertheless, $MT_\infty$ fails to be $3$-connected.

\section{Properties of the connectivity function}\label{sec:propcon}

In this section we prove a number of lemmas that 
will be necessary in extending Tutte's linking theorem to finitary matroids. As 
a by-product we will see that a number of standard properties of the connectivity
function of a finite matroid extend to infinite matroids, 
see specifically Lemmas~\ref{prop:kapdual},~\ref{lem:submod}, and~\ref{lem:minorconn}. 

Let us start by showing that connectivity is invariant under duality.
\begin{lemma}
\label{prop:kapdual}
For any matroid $M$ and any $X\subseteq E(M)$
it holds that $\cf_M(X)=\cf_{M^*}(X)$.
\end{lemma}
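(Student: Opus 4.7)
The plan is to make a coordinated choice of bases: pick a basis $A_X$ of $M.X$ and extend it (using that $A_X$ is independent in $M$ by Lemma~\ref{lem:contrbase}(i)$\Rightarrow$(ii), together with (IM)) to a basis $B_X\supseteq A_X$ of $M|X$, and symmetrically choose a basis $A_Y$ of $M.Y$ extended to a basis $B_Y\supseteq A_Y$ of $M|Y$, where $Y:=E\sm X$. Writing $F:=B_X\sm A_X$ and $F':=B_Y\sm A_Y$, I would then observe that $A_X\cup B_Y$ and $B_X\cup A_Y$ are both bases of $M$ by Lemma~\ref{lem:contrbase}(i)$\Rightarrow$(iii), so Lemma~\ref{lem:del}(ii) applied to each gives $|F|=|F'|=\cf_M(X)$.

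The crux is translating this to $M^*$. From the paper's definition of contraction one has $(M.X)^*=M^*|X$, so the complements $B^*_X:=X\sm A_X$ and $B^*_Y:=Y\sm A_Y$ are bases of $M^*|X$ and $M^*|Y$ respectively. Since $F\subseteq B_X\subseteq X$ and $F\cap A_X=\emptyset$, we have $F\subseteq B^*_X$, and
\[
(B^*_X\sm F)\cup B^*_Y \;=\; (X\sm B_X)\cup(Y\sm A_Y)\;=\; E\sm(B_X\cup A_Y),
\]
which is the complement of a basis of $M$, hence a basis of $M^*$. Lemma~\ref{lem:del}(ii) applied inside $M^*$ then yields $\cf_{M^*}(X)=\del_{M^*}(B^*_X,B^*_Y)\le|F|=\cf_M(X)$. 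The reverse inequality follows by applying the same argument to $M^*$ in place of $M$ and invoking $M^{**}=M$.

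The only mildly subtle point is the upfront choice to build $B_X$ and $B_Y$ as extensions of bases $A_X,A_Y$ of the contractions, rather than arbitrary bases of the restrictions; this is what makes the same set $F$ simultaneously measure $\cf_M(X)$ and provide an upper bound on $\cf_{M^*}(X)$. Once that coordination is in place, the proof collapses to routine bookkeeping with Lemmas~\ref{lem:contrbase} and~\ref{lem:del} together with the restriction/contraction duality.
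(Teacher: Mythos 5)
Your proof is correct and is essentially the paper's own argument in a different dress: your $B^*_X=X\sm A_X$ and $B^*_Y=Y\sm A_Y$ coincide with the paper's dual bases $(X\sm B_X)\cup F_X$ and $(Y\sm B_Y)\cup F_Y$, and the displayed complementation identity is the same key step. The only cosmetic difference is that you build the restriction bases by extending contraction bases rather than extracting $F_X,F_Y$ from given restriction bases via (IM), and in fact Lemma~\ref{lem:del}(ii) already gives you equality $\del_{M^*}(B^*_X,B^*_Y)=|F|$, so the closing duality step for the reverse inequality is not needed.
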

\begin{proof}
Set $Y:=E(M)\sm X$, 
let $B_X$ be a basis of $M|X$, and $B_Y$ a basis of $M|Y$.
By~(IM), we can pick $F_X\subseteq B_X$ and $F_Y\subseteq B_Y$ so that $(B_X\sm F_X)\cup B_Y$
and $B_X\cup(B_Y\sm F_Y)$ are bases of $M$.

Then $B^*_X:=(X\sm B_X)\cup F_X$ and $B^*_Y:=(Y\sm B_Y)\cup F_Y$
are bases of $M^*|X$ and $M^*|Y$, respectively. Indeed, 
$B_X\sm F_X$ is a basis of $M.X$, by Lemma~\ref{lem:contrbase},
which implies that $X\sm (B_X\sm F_X)=B^*_X$ is a basis 
of $(M.X)^*=M^*|X$. 
For $B^*_Y$ we reason in a similar way.

Moreover, since $B_X\cup (B_Y\sm F_Y)$ is a basis of $M$ 
we see from 
\[
(B^*_X\sm F_X)\cup B^*_Y=(X\sm B_X)\cup (Y\sm B_Y)\cup F_Y
= E(M) \sm (B_X\cup (B_Y\sm F_Y))
\]
that $(B^*_X\sm F_X)\cup B^*_Y$ is a basis of $M^*$.
Therefore
\[
\del_{M}(B_X,B_Y)=|F_X|=\del_{M^*}(B^*_X,B^*_Y),
\]
and thus $\cf_M(X)=\cf_{M^*}(X)$, as desired.
\end{proof}

\begin{lemma}
\label{lem:submod}
The connectivity function $\cf$ is submodular, i.e.\ for all
$X,Y\subseteq E(M)$ for a matroid $M$ it holds  that 
\[
\cf(X)+\cf(Y)\geq\cf(X\cup Y)+\cf(X\cap Y). 
\]
\end{lemma}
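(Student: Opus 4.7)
Write $A=X\cap Y$, $B_1 = X\setminus Y$, $C = Y\setminus X$, $D = E\setminus(X\cup Y)$. My plan is to adapt the standard finite-matroid proof of submodularity of the rank function (which implies submodularity of $\cf$), replacing rank values --- which here may be infinite --- by the finite deletion quantities delivered by Lemma~\ref{lem:del}.

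First I fix a convenient system of nested bases. Pick a basis $B_A$ of $M|A$, and use~(IM) to extend it to a basis $B_X = B_A\cup J_{B_1}$ of $M|X$ (with $J_{B_1}\subseteq B_1$, since $B_A$ is maximal in $A$), and then to a basis $B_\cup = B_X\cup J_C$ of $M|(X\cup Y)$ (with $J_C\subseteq C$, since $B_X$ is maximal in $X$). The independent set $B_A\cup J_C\subseteq Y$ then extends by~(IM) to a basis $B_Y$ of $M|Y$, with the extension drawn from $C$. Picking a basis $B_D$ of $M|D$ and running the analogous cascade on the complementary side yields matched bases of $M|(E\setminus X)$, $M|(E\setminus Y)$, and $M|(E\setminus A)$. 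By Lemma~\ref{lem:del}(ii), for each $Z\in\{A,X,Y,X\cup Y\}$ there is then a set $F_Z\subseteq B_Z$ with $|F_Z|=\cf(Z)$ such that $(B_Z\setminus F_Z)\cup B_{E\setminus Z}$ is a basis of~$M$. If $\cf(X)$ or $\cf(Y)$ is infinite the lemma is trivial, so I assume all $F_Z$ are finite.

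The core claim is then $|F_X|+|F_Y|\geq |F_A|+|F_\cup|$. In the finite-matroid case this is exactly classical submodularity of rank, obtained by counting the overlap of $B_X$ and $B_Y$ over their common restriction $B_A$. To reach the general case, I plan to localise to a finite sub-computation: choose a finite set $T\subseteq E$ containing $F_X\cup F_Y\cup F_A\cup F_\cup$ together with enough of each basis to witness the four deletion identities, then contract a suitable portion of the bases lying outside $T$ (independent in $M$) and delete the rest, using Lemma~\ref{lem:contrbase} to track how bases of the restrictions descend to bases of the minor. In the resulting finite minor the four deletion identities persist, and the classical rank submodularity applied there delivers the inequality. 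The main obstacle will be arranging the minor so that all four $\cf$-values are simultaneously preserved: a priori the intersection of a chosen basis of $M$ with a subset $Z$ need not be a basis of $M|Z$, so the inside and outside nested bases must be coordinated very carefully. I expect an appeal to duality via Lemma~\ref{prop:kapdual} to be useful here, since it lets the complementary side of the partition be handled symmetrically to the ``inside'' side.
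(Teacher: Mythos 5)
There is a genuine gap here: your argument stops exactly at the point where the real work begins. The set-up of nested bases $B_A\subseteq B_X\subseteq B_\cup$ and the reduction to the inequality $|F_X|+|F_Y|\ge |F_A|+|F_\cup|$ is fine and is close in spirit to the paper's opening moves, but the proposed ``localisation to a finite sub-computation'' is never carried out, and you yourself flag the coordination of the four complementary bases as an unresolved obstacle. The obstruction is concrete. The lemma is stated for \emph{arbitrary} matroids, in which circuits may be infinite; the fact that $(B_Z\setminus F_Z)\cup B_{E\setminus Z}$ is a basis while $B_Z\cup B_{E\setminus Z}$ is not is a universally quantified statement over possibly infinite circuits, so there is no finite set $T$ that ``witnesses'' the four deletion identities. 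Worse, to shrink the ground set you must contract infinite independent sets, and contraction creates new dependencies: $\cf$ of the three remaining sets changes in ways you cannot control unless the contracted set is simultaneously compatible with bases of $M|A$, $M|X$, $M|Y$, $M|(X\cup Y)$ \emph{and} their complements. No lemma in the paper (and nothing in your sketch) delivers such a set, and an appeal to duality does not resolve this, since the four partitions are genuinely different.

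For comparison, the paper does not reduce to the finite case at all. It builds the bases directly ($B_\cap$ of $M|(X\cap Y)$, $B_{\overline\cap}$ of $M-(X\cup Y)$, extensions $I_{X\sm Y}, I_{Y\sm X}$ into the symmetric difference), and the crucial infinitary step is proving that $B_\cap\cup I_{X\sm Y}\cup I_{Y\sm X}$ is independent. That step uses the strong circuit elimination axiom (C3) applied to the whole family of fundamental circuits $(C_x)_{x\in F\cap C}$ at once --- precisely the kind of tool that survives when circuits and the sets $F$-related data are infinite. Your proposal contains no substitute for this step; without it, the claimed lower bounds for $\cf(X)$ and $\cf(Y)$ and upper bounds for $\cf(X\cap Y)$ and $\cf(X\cup Y)$ cannot be assembled. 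If you want to salvage your plan, you would either have to restrict to finitary matroids (weaker than the stated lemma) or prove the independence claim directly, at which point you are essentially reproducing the paper's argument.
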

\begin{proof}
Denote the ground set of $M$ by $E$.
Choose a basis $B_\cap$ of $M|(X\cap Y)$, and a basis $B_{\overline\cap}$
of $M- (X\cup Y)$. Pick $F\subseteq B_\cap\cup B_{\overline\cap}$
so that $I:=(B_\cap\cup B_{\overline\cap})\sm F$ is a basis
of $M|(X\cap Y)\cup(E\sm (X\cup Y))$.
Next, we use (IM) to extend $I$ into $(X\sm Y)\cup (Y\sm X)$: 
let $I_{X\sm Y}\subseteq X\sm Y$
and $I_{Y\sm X}\subseteq Y\sm X$ so that $I\cup I_{X\sm Y}\cup I_{Y\sm X}$
is a basis of $M$.

We claim that $I_{\cup}:=B_\cap\cup I_{X\sm Y}\cup I_{Y\sm X}$ (and by symmetry
also $I_{\overline\cup}:=B_{\overline\cap}\cup I_{X\sm Y}\cup I_{Y\sm X}$), is independent.
Suppose that $I_\cup$ contains a circuit $C$. 
For each $x\in F\cap C$, denote by $C_x$
the (fundamental) circuit in $I\cup\{x\}$. 
As $C$ meets $I_{X\sm Y}\cup I_{Y\sm X}$, we have
$C\nsubseteq \bigcup_{x\in F\cap C}C_x$. Thus, (C3) is applicable
and yields a circuit $C'\subseteq (C\cup\bigcup_{x\in F\cap C}C_x)\sm F$.
As therefore $C'$ is a subset of the independent set $I\cup I_{X\sm Y}\cup I_{Y\sm X}$,
we obtain a contradiction.

Since $I_\cup$ is independent and $B_\cap\subseteq I_\cup$ a basis of $M|(X\cap Y)$, 
we can pick $F_\cup^X\subseteq X\sm (Y\cup I_{\cup})$ and
$F_\cup^Y\subseteq Y\sm(X\cup I_{\cup})$ so that $I_\cup\cup F_\cup^X\cup F_\cup^Y$ 
is a basis of $M|(X\cup Y)$.
In a symmetric way, we pick  $F_{\overline\cup}^X\subseteq X\sm (Y\cup I_{\overline\cup})$ and
$F_{\overline\cup}^Y\subseteq Y\sm(X\cup I_{\overline\cup})$
so that $I_{\overline\cup}\cup F_{\overline\cup}^X\cup F_{\overline\cup}^Y$
is a basis of $M-(X\cap Y)$.

Let us compute a lower bound for $\cf(X)$. Both sets 
$I_X:=B_\cap\cup I_{X\sm Y}\cup F_\cup^X$ and 
$I_{\overline X}:=B_{\overline\cap}\cup I_{Y\sm X}\cup F_{\overline\cup}^Y$
are independent. As furthermore $I_X\subseteq X$ and $I_{\overline X}\subseteq E\sm X$,
we obtain that $\cf(X)\geq\del(I_X,I_{\overline X})$. Since each 
$x\in F\cup F_\cup^X\cup F_{\overline\cup}^Y$ gives rise to a circuit
in $I\cup I_{X\sm Y}\cup I_{Y\sm X}\cup\{x\}$, we get that 
$\del(I_X,I_{\overline X})\geq |F|+|F_\cup^X|+|F_{\overline\cup}^Y|$.
In a similar way we obtain a lower bound for $\cf(Y)$. Together
these result in 
\[
\cf(X)+\cf(Y)\geq 
2|F|+|F_\cup^X|+|F_\cup^Y|+|F_{\overline\cup}^X|+|F_{\overline\cup}^Y|.
\]

To conclude the proof we compute upper bounds
for $\cf(X\cap Y)$ and $\cf(X\cup Y)$. Since
$B_\cap$ is  a basis of $M|(X\cap Y)$ and 
$B_{\overline\cup}:=I_{\overline\cup}\cup F_{\overline\cup}^X\cup F_{\overline\cup}^Y$
is one of $M-(X\cap Y)$, it holds that $\cf(X\cap Y)=\del(B_\cap,B_{\overline\cup})$.
Since $I\cup I_{X\sm Y}\cup I_{Y\sm X}$ is independent, we get that
$\del(B_\cap,B_{\overline\cup})\leq |F|+|F_{\overline\cup}^X|+ |F_{\overline\cup}^Y|$.
For $\cf(X\cup Y)$ the computation is similar, so that we obtain
\[
\cf(X\cap Y)+\cf(X\cup Y)\leq 
2|F|+|F_\cup^X|+|F_\cup^Y|+|F_{\overline\cup}^X|+|F_{\overline\cup}^Y|,
\]
as desired.
\end{proof}

\begin{lemma}
\label{lem:cap}
In a matroid $M$
let $(X_i)_{i\in I}$ be a family of nested subsets of $E(M)$,
i.e.\ $X_i\subseteq X_j$ if $i\geq j$, and set $X:=\bigcap_{i\in I} X_i$. 
If $\cf(X_i)\leq k$
for all $i\in I$ then $\cf(X)\leq k$.
\end{lemma}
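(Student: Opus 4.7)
The plan is to exhibit bases $B$ of $M|X$ and $B'$ of $M-X$ together with a set $G\subseteq B$ of size at most $k$ such that $(B\setminus G)\cup B'$ is independent; by Lemma~\ref{lem:del}(ii) this gives $\cf(X)\leq k$. First pick such $B$ and $B'$ by (IM). For each $i\in I$, extend $B\subseteq X\subseteq X_i$ to a basis $B_i$ of $M|X_i$ via (IM) and extend the independent set $B'\cap(E\setminus X_i)$ to a basis $B'_i$ of $M-X_i$. The hypothesis $\cf(X_i)\leq k$ with Lemma~\ref{lem:del}(ii) supplies a set $F_i\subseteq B_i$ with $|F_i|\leq k$ such that $(B_i\setminus F_i)\cup B'_i$ is a basis of $M$. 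Setting $G_i:=F_i\cap B$, we get $G_i\subseteq B$ with $|G_i|\leq k$ and an inclusion $(B\setminus G_i)\cup(B'\cap(E\setminus X_i))\subseteq(B_i\setminus F_i)\cup B'_i$, so the left-hand side is independent.

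I then pass to a single $G$ by compactness. The subspace $\{H\subseteq B:|H|\leq k\}$ of $\{0,1\}^B$ is closed (its complement is the union, over all $(k+1)$-tuples of distinct elements of $B$, of the basic open sets containing those tuples), hence compact by Tychonoff; the nested ordering on $I$ is directed, so the net $(G_i)_{i\in I}$ has a cluster point $G$ with $|G|\leq k$. Let $S\subseteq (B\setminus G)\cup B'$ be any finite subset, written as $S=S_B\cup S_{B'}$ with $S_B\subseteq B\setminus G$ and $S_{B'}\subseteq B'$. Since $S_{B'}\subseteq B'\subseteq\bigcup_i(E\setminus X_i)$ is finite, nestedness provides $i_0\in I$ with $S_{B'}\subseteq E\setminus X_{i_0}$. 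The basic open neighbourhood $\{H:H\cap S_B=\emptyset\}$ of $G$ must contain $G_i$ for some $i\geq i_0$ by the cluster-point property, giving $G_i\cap S_B=\emptyset$ and automatically $X_i\subseteq X_{i_0}$. For this $i$, $S\subseteq (B\setminus G_i)\cup(B'\cap(E\setminus X_i))$, which is independent, so $S$ is independent.

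Hence every finite subset of $(B\setminus G)\cup B'$ is independent. If $M$ is finitary, axiom (I4) immediately upgrades this to independence of $(B\setminus G)\cup B'$ itself, so $\cf(X)\leq k$; the co-finitary case reduces to the finitary one by applying the argument to $M^*$ and invoking Lemma~\ref{prop:kapdual}. The main obstacle is bridging the gap in the fully general setting, where an infinite circuit could in principle be contained in $(B\setminus G)\cup B'$ while having only independent finite subsets; closing this requires a further invocation of (IM)/(CM) together with the fact that $B$ and $B'$ are genuine bases of $M|X$ and $M-X$, ruling out any such rogue infinite circuit.
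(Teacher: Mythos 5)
Your argument is clean up to and including the compactness step: the sets $G_i$ exist, the cluster point $G$ exists, and every finite subset of $(B\setminus G)\cup B'$ is indeed independent. But the final step is a genuine gap, and you flag it yourself without closing it. The lemma is stated for arbitrary matroids (and is invoked in that generality inside Lemma~\ref{lem:grow}), and in an arbitrary infinite matroid a set all of whose finite subsets are independent need \emph{not} be independent --- that implication is exactly axiom (I4), i.e.\ the definition of finitary. The paper's own example $MT_\infty$ makes this concrete: the edge set of a double ray is a circuit, yet every finite subset of it is independent. Nor can the gap be closed ``by a further invocation of (IM)/(CM)'': since $G$ is only a cluster point of the net $(G_i)$, the set $(B\setminus G)\cup B'$ is not an increasing union of the independent sets $(B\setminus G_i)\cup\bigl(B'\cap(E\setminus X_i)\bigr)$ along any cofinal subfamily, so there is no directed-limit structure to exploit, and the finite-subset information you have extracted is provably insufficient to rule out an infinite circuit. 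What you have proved is the lemma for finitary matroids and, by Lemma~\ref{prop:kapdual}, for co-finitary ones; that is weaker than the statement.

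The paper avoids any limiting process. With $Y:=E(M)\setminus X_1$ and $Z:=X_1\setminus X$, it assembles a basis of $M$ of the form $B_X\cup(B_Y\setminus F)\cup(I_Z\setminus F_Z)$, where $|F|\leq k$ is finite (using $\cf(X_1)\leq k$) and $F_Z\subseteq I_Z$ may be infinite, so that $\cf(X)=|F|+|F_Z|$ by Lemma~\ref{lem:del}. If this were at least $k+1$, one chooses a \emph{single} index $j$ with $|F|+|F_Z\cap Y_j|\geq k+1$ and exhibits explicit bases of $M|X_j$ and $M|Y_j$ whose union misses a basis of $M$ by at least $k+1$ elements, contradicting $\cf(X_j)\leq k$. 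The moral is that the hypothesis should be violated at some finite stage $X_j$ rather than only ``in the limit''; to repair your proof you would need to replace the cluster-point construction by such a direct finite witness inside one $X_j$.
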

\begin{proof}
Set $Y_i:=E(M)\sm X_i$ for $i\in I$, $Y:=\bigcap_{i\in I}Y_i=Y_1$
and $Z:=E(M)\sm (X\cup Y)$. Pick bases $B_X$ of $M|X$ and $B_Y$ of $M|Y$.
Choose $I_Z\subseteq Z$ so that $B_Y\cup I_Z$ is a basis of $M|(Y\cup Z)$.
Moreover, as $\cf(X_1)\leq k$ 
there exists a finite set (of size $\leq k)$ $F\subseteq B_Y$ so that 
$B_X\cup (B_Y\sm F)$ is a basis of $M|(X\cup Y)$, and a (possibly infinite)
set $F_Z\subseteq I_Z$ so that  $B_X\cup (B_Y\sm F)\cup (I_Z\sm F_Z)$
is a basis of $M$.

Suppose that $k+1\leq\cf(X)=|F|+|F_Z|$. Then choose $j\in I$
large enough so that $|F|+|F_Z\cap Y_j|\geq k+1$. Use (IM) to extend
the independent subset $B_X\cup (I_Z\cap X_j)\sm F_Z$ of $X_j$
to a basis $B$ of $M|X_j$. The set $B_Y\cup (I_Z\cap Y_j)$
is independent too, and we may extend it to a basis $B'$ of $M|Y_j$.
As $B_X\cup B_Y\cup (I_Z\sm (F_Z\cap X_j))\subseteq B\cup B'$
we obtain with
\[
\cf(X_j)=\del(B,B')\geq |F|+|F_Z\cap Y_j|\geq k+1
\]
a contradiction.
\end{proof}

For disjoint sets $X,Y\subseteq E(M)$ define 
\[
\cf_M(X,Y):=\min\{\cf_M(U) : X\subseteq U \subseteq E(M)\sm Y\}.
\]
Again, we may drop the subscript $M$ if no confusion is likely.

\begin{lemma}
\label{lem:minorconn}
Let $M$ be a matroid, and let $N=M/C-D$ be a minor of $M$.
Let $X$ and $Y$ be disjoint subsets of $E(N)$.
Then $\cf_N(X,Y)\leq\cf_M(X,Y)$.
\end{lemma}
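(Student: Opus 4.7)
The plan is to prove the stronger local statement: for every set $U$ with $X\subseteq U\subseteq E(M)\sm Y$, the set $V:=U\cap E(N)=U\sm(C\cup D)$ satisfies $X\subseteq V\subseteq E(N)\sm Y$ and $\cf_N(V)\leq\cf_M(U)$. The lemma follows by taking the infimum over $U$, since $V$ is then a valid candidate for the minimum defining $\cf_N(X,Y)$.

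I would first reduce to a single minor operation. Writing $N=(M-D)/C$ and applying the bound twice, it suffices to handle separately the deletion case $N=M-D$ and the contraction case $N=M/C$. Using Lemma~\ref{prop:kapdual} together with the standard identity $(M/C)^*=M^*-C$, the contraction case is precisely the deletion case applied in the dual matroid, so only the deletion case needs to be proved directly.

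For the deletion case, put $V:=U\sm D$. The key move is to choose bases from the $N$-side and extend them upwards to $M$. Pick a basis $B_V$ of $M|V=N|V$ and, using (IM), extend it to a basis $B_U\supseteq B_V$ of $M|U$. Similarly pick $B_{\bar V}^N$ a basis of $M|((E(M)\sm U)\sm D)=N|(E(N)\sm V)$ and extend it to a basis $B_{\bar U}\supseteq B_{\bar V}^N$ of $M|(E(M)\sm U)$. Let $F\subseteq B_U\cup B_{\bar U}$ be of minimum size so that $(B_U\cup B_{\bar U})\sm F$ is a basis of $M$; by Lemma~\ref{lem:del} we have $|F|=\cf_M(U)$. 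Setting $F':=F\cap(B_V\cup B_{\bar V}^N)$ gives $F'\subseteq B_V\cup B_{\bar V}^N$ and $|F'|\leq|F|$, and moreover
\[
(B_V\cup B_{\bar V}^N)\sm F'=(B_V\cup B_{\bar V}^N)\cap\bigl((B_U\cup B_{\bar U})\sm F\bigr)
\]
is contained in the basis $(B_U\cup B_{\bar U})\sm F$ of $M$ and therefore independent in $M$. Since it lies in $E(N)=E(M)\sm D$, it is also independent in $N$. Hence $\cf_N(V)=\del_N(B_V,B_{\bar V}^N)\leq|F'|\leq|F|=\cf_M(U)$, as required. Composing this with its dual version then handles the general minor $N=M/C-D$ by applying the bound first across $M\to M-D$ and then across $M-D\to (M-D)/C=N$.

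The main obstacle is the mismatch between bases in $M$ and in $N$: an arbitrary basis of $M|U$ need not restrict to a basis of $N|V$, so one cannot start with $M$-bases and project down. Extending from the $N$-side upward, as above, is what forces the chosen $N$-bases to sit inside the $M$-bases and makes the restriction of the $M$-witness $F$ an automatic witness for $\del_N$.
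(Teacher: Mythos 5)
Your proof is correct and takes essentially the same route as the paper's: the deletion case is handled by choosing bases of the minor and extending them via (IM) to bases of $M|U$ and $M-U$, so that the witness set $F$ restricts to a witness for $\del_N$, and the contraction case is reduced to deletion in the dual via Lemma~\ref{prop:kapdual} and $(M/C)^*=M^*-C$. The only (harmless) difference is that you prove the pointwise inequality $\cf_N(U\sm(C\cup D))\le\cf_M(U)$ for every $U$ and then minimise, whereas the paper applies the same argument to a single optimal $U$.
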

\begin{proof}
Let $U\subseteq E(M)$ be such that $X\subseteq U \subseteq E(M)\sm Y$
and $\cf_M(U)=\cf_M(X,Y)$.
First suppose that $N=M-D$ for $D\subseteq E(M)\sm (X\cup Y)$.
Pick a basis $B'_{U}$  of $M|(U\sm D)$
 and extend it to a basis $B_U$ 
of $M|U$. Define $B'_{W}$ and $B_W$ analogously  for $W:=E(M)\sm U$.
Let $F\subseteq B_U\cup B_W$ be such that $(B_U\cup B_W)\sm F$ is a basis
of $M$. Since $B'_{U}$ and $B'_{W}$ are bases of 
$M|(U\sm D)=N|(U\sm D)$
resp.\ of $N|(W\sm D)$, and since clearly 
$(B'_{U}\cup B'_{W})\sm (F\sm D)$ is independent
it follows that $\cf_N(X,Y)\leq\cf_N(U\sm D)\leq |F|=\cf_M(X,Y)$.

Next, assume that $N=M/C$ for some $C\subseteq E(M)\sm (X\cup Y)$. Then, using 
 Lemma~\ref{prop:kapdual} we obtain
\[
\cf_N(X,Y)=\cf_{(M^*-C)^*}(X,Y) = \cf_{M^*-C}(X,Y)\leq
\cf_{M^*}(X,Y)=\cf_M(X,Y).
\]
The lemma follows by first contracting $C$ and then deleting $D$.
\end{proof}

\begin{lemma}
\label{lem:grow}
In a matroid $M$
let $X,Y$ be two disjoint subsets of $E(M)$, and let $X'\subseteq X$ and
$Y'\subseteq Y$ be such that $\cf(X',Y')=k-1$. Then
$\cf(X,Y)\geq k$ if and only if there exist $x\in X$ and $y\in Y$
so that $\cf(X'+x,Y'+y)=k$.
\end{lemma}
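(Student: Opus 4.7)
The `if' direction is immediate from monotonicity of $\cf(\cdot,\cdot)$ in each argument: enlarging either argument only shrinks the admissible family of sets $U$ in the definition, so $\cf(X,Y)\geq\cf(X'+x,Y'+y)=k$.

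For the `only if' direction, assume $\cf(X,Y)\geq k$. The heart of the proof is the auxiliary claim that some pair $(x,y)\in X\times Y$ satisfies $\cf(X'+x,Y'+y)\geq k$. I would argue by contradiction. Suppose $\cf(X'+x,Y'+y)\leq k-1$, hence $=k-1$ by monotonicity from $\cf(X',Y')=k-1$, for every such pair, and choose witnesses $U_{x,y}$ with $X'+x\subseteq U_{x,y}\subseteq E\sm(Y'+y)$ and $\cf(U_{x,y})=k-1$. Fixing $y\in Y$, I would build $V_y:=\bigcup_{x\in X}U_{x,y}$ by transfinite recursion on a well-ordering of $X$: at successor stages, Lemma~\ref{lem:submod} combined with the bound $\cf(U_{x_1,y}\cap U_{x_2,y})\geq\cf(X',Y')=k-1$ (the intersection contains $X'$ and lies in $E\sm Y'$) keeps the running union at $\cf$-value $\leq k-1$; at limit stages, the nested-union analogue of Lemma~\ref{lem:cap} (via the identity $\cf(A)=\cf(E\sm A)$, immediate from the symmetry of $\del$ in its two arguments) preserves the bound. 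This yields $\cf(X,Y'+y)\leq\cf(V_y)\leq k-1$. An analogous transfinite recursion intersecting the $V_y$ one at a time -- using submodularity at successor stages (where $\cf(W\cup V_y)\geq\cf(X,Y')\geq k-1$) and Lemma~\ref{lem:cap} at limit stages (where the partial intersections form a nested decreasing family) -- produces $W:=\bigcap_{y\in Y}V_y$ with $X\subseteq W\subseteq E\sm Y$ and $\cf(W)\leq k-1$, contradicting $\cf(X,Y)\geq k$ and establishing the auxiliary claim.

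To upgrade `$\geq k$' to `$=k$', I would rely on the step property $\cf(A+a,B),\cf(A,B+b)\leq\cf(A,B)+1$ (obtained by adjusting a single element of a witness for $\cf(A,B)$), which bounds $\cf(X'+x,Y'+y)\leq\cf(X',Y')+2=k+1$. The value from the auxiliary claim therefore lies in $\{k,k+1\}$; if it is already $k$ we are done, and if it equals $k+1$ then step forces $\cf(X'+x,Y')=k$, so replacing $y$ by any $y''\in Y'$ yields $\cf(X'+x,Y'+y'')=\cf(X'+x,Y')=k$. In the degenerate subcase $Y'=\emptyset$ (symmetrically $X'=\emptyset$), where this substitution is unavailable, the identity $\cf(A,\emptyset)\leq\cf(E)=0$ forces $k=1$, after which the step property alone gives $\cf(X'+x,\{y\})\leq 1=k$ and rules out the value $k+1$ outright. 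The main technical difficulty lies in the doubly transfinite submodularity argument for the auxiliary claim, where Lemma~\ref{lem:cap} (both directly and in its dual form) is essential at limit stages of the two constructions; once the auxiliary claim is in hand, the promotion to equality follows routinely from the step property and the case distinctions above.
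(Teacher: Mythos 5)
Your proof is correct and rests on the same two pillars as the paper's --- submodularity (Lemma~\ref{lem:submod}) to combine two witnesses, and Lemma~\ref{lem:cap} to pass to limits --- but the execution differs in two respects. First, where the paper uses Zorn's Lemma to extract a $\subseteq$-minimal witness $U_x$ for each $x$ and derives a contradiction from minimality (showing $U_x\cap Y=\emptyset$, then repeating the trick on the family containing the complements $E(M)\sm U_x$), you build the contradicting set $W$ with $X\subseteq W\subseteq E(M)\sm Y$ and $\cf(W)\leq k-1$ directly, by a transfinite union over $x$ followed by a transfinite intersection over $y$; the two schemes are interchangeable, and yours has the small advantage of exhibiting the final witness explicitly. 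Second --- and here your write-up is genuinely more careful than the paper's --- you split the conclusion into the existence of a pair with $\cf(X'+x,Y'+y)\geq k$ and a separate promotion to $=k$ via the step property $\cf(A+a,B)\leq\cf(A,B)+1$, explicitly handling the a priori possible value $k+1$ and the degenerate case $Y'=\emptyset$ (which forces $k=1$). The paper instead negates ``there is a pair with value exactly $k$'' and immediately asserts that every $\cf(X'+x,Y'+y)$, and every $\cf(X'+x,Y')$, equals $k-1$; that assertion is true, but justifying it needs precisely the step-property and $y''\in Y'$ observations you spell out, so your version fills in reasoning the paper leaves implicit. One nitpick: in the successor step of the union construction the bound you need is $\cf(W_\alpha\cap U_{x,y})\geq k-1$ for the \emph{running union} $W_\alpha$, not merely for two of the original witnesses, but the same one-line argument (the intersection contains $X'$ and avoids $Y'$) covers it.
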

\begin{proof}
Necessity is trivial. To prove sufficiency,
assume that there exist no $x,y$ as in the statement.
For $x\in X$ denote by $\mathcal U_x$ the sets $U$  
with $X'+x\subseteq U\subseteq E(M)\sm Y'$ and $\cf(U)=k-1$. 
By our assumption,
$\mathcal U_x\neq\emptyset$.
By Zorn's Lemma and Lemma~\ref{lem:cap} there exists an $\subseteq$-minimal element $U_x$
in $\mathcal U_x$. 

Suppose there is a $y\in Y\cap U_x$. Again by the assumption, we can find
a set $Z$ with $X'+x\subseteq Z\subseteq E(M)\sm (Y'+y)$ and
$\cf(Z)=k-1$. From Lemma~\ref{lem:submod} it follows that 
$\cf(U_x\cap Z)=k-1$, and thus that $U_x\cap Z$ is an element 
of $\mathcal U_x$. As $y\notin U_x\cap Z$ it is strictly smaller than $U_x$
and therefore a contradiction to the minimality of $U_x$. Hence, $U_x$ 
is disjoint from $Y$.

Next, let $\mathcal W$ be the set of sets $W$ 
with $Y\subseteq W\subseteq E(M)\sm X'$ and $\cf(W)=k-1$. 
As $E(M)\sm U_x\in\mathcal W$ for every $x\in X$,
$\mathcal W$ is non-empty and we can apply Zorn's Lemma and Lemma~\ref{lem:cap}
in order to find an $\subseteq$-minimal element $W'$ in $\mathcal W$.
Suppose that there is a $x\in X\cap W'$. But then Lemma~\ref{lem:submod}
shows that $W'\cap (E(M)\sm U_x)\in\mathcal W$, a contradiction to the minimality
of $W'$.

In conclusion, we have found that $Y\subseteq W\subseteq E(M)\sm X$ and $\cf(W)=k-1$, 
which implies $\cf(X,Y)\leq k-1$.  This contradiction proves the claim.
\end{proof}

\section{The linking theorem}

\newtheorem*{InfLinking}{Theorem~\ref{thm:linking}}

We prove our main theorem in this section, Tutte's linking theorem
for finitary (and co-finitary) matroids, which we restate here:
\begin{InfLinking}
Let $M$ be a finitary 
or co-finitary matroid, and let $X$ and $Y$ be two disjoint subsets of $E(M)$.  Then 
there exists a partition $(C, D)$ of $E(M) \sm (X \cup Y)$ such that $\cf_{M/C-D}(X, Y) = 
\cf_M(X, Y)$.  
\end{InfLinking}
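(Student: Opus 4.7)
The proof combines duality, a Zorn's lemma construction, and a compactness argument using the finitary hypothesis. Since $(M/C-D)^* = M^*/D-C$ and $\cf$ is invariant under duality (Lemma~\ref{prop:kapdual}), the statement for a co-finitary $M$ reduces to the statement for the finitary $M^*$, with the partition $(C,D)$ swapped. We therefore assume $M$ is finitary and write $k := \cf_M(X,Y)$. Apply Zorn's lemma to the poset $\mathcal{P}$ of pairs $(C,D)$ of disjoint subsets of $E(M)\setminus(X\cup Y)$ with $\cf_{M/C-D}(X,Y)=k$, ordered componentwise; it contains $(\emptyset,\emptyset)$.

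A maximal element $(C^*,D^*)\in\mathcal{P}$ must satisfy $C^*\cup D^* = E(M)\setminus(X\cup Y)$: otherwise, picking $e$ in the remainder and setting $N:=M/C^*-D^*$ (finitary, by Lemma~\ref{cir3}), we invoke a \emph{single-element lemma}: for any matroid $N$, disjoint $X,Y\subseteq E(N)$, and $e\in E(N)\setminus(X\cup Y)$, at least one of $\cf_{N-e}(X,Y)$ and $\cf_{N/e}(X,Y)$ equals $\cf_N(X,Y)$. The lemma is proved by contradiction: if both minor values are $<k$ (the only possibility by Lemma~\ref{lem:minorconn}), take witness sets $U_1\supseteq X$ with $\cf_{N-e}(U_1)<k$ and $U_2\supseteq X$ with $\cf_{N/e}(U_2)<k$, and then apply the submodularity of $\cf$ (Lemma~\ref{lem:submod}) to $(U_1\cup\{e\},U_2)$, using $(U_1\cup\{e\})\cap U_2 = U_1\cap U_2$, together with an analysis of how $\cf_N(U)$ changes under a single-element deletion or contraction, to produce $U^*$ with $X\subseteq U^*\subseteq E(N)\setminus Y$ and $\cf_N(U^*)<k$, contradicting $\cf_N(X,Y)=k$.

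The chain condition for Zorn is the crux, and where finitariness enters essentially. Given a chain $\{(C_\gamma,D_\gamma)\}_\gamma$ in $\mathcal{P}$ with unions $C,D$ and $N:=M/C-D$, suppose for contradiction $\cf_N(X,Y)\leq k-1$, witnessed by $U\supseteq X$ in $E(N)\setminus Y$, bases $B_U$ of $N|U$ and $B_{U^c}$ of $N-U$, and a finite $F\subseteq B_U\cup B_{U^c}$ of size $\leq k-1$ with $(B_U\cup B_{U^c})\setminus F$ a basis of $N$. Since $N$ is finitary (by Lemma~\ref{cir3}), each fundamental circuit $C_f$ of $f\in F$ in $((B_U\cup B_{U^c})\setminus F)+f$ is finite; applying Lemma~\ref{cir3} in $M$, each $C_f$ lifts to a finite circuit $C_f\cup X_f$ of $M$ with $X_f\subseteq C$. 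The finite set $X^*:=\bigcup_{f\in F}X_f$ lies in $C_{\gamma^*}$ for some index $\gamma^*$. For $\gamma\geq\gamma^*$, letting $U_\gamma:=U\cup(C\setminus C_\gamma)$ and using Lemma~\ref{lem:contrbase} to combine $B_U$ with a basis of $N_\gamma|(C\setminus C_\gamma)$ into a basis of $N_\gamma|U_\gamma$, a careful choice of basis of $N_\gamma-U_\gamma$ shows that the same $F$ witnesses $\cf_{N_\gamma}(U_\gamma)\leq k-1$, contradicting $\cf_{N_\gamma}(X,Y)=k$. The main obstacle is carrying out this translation of the $F$-witness between the minors $N$ and $N_\gamma$; the finitary hypothesis is exactly what guarantees each $X_f$ is finite, so that a single $\gamma^*$ captures all of them.
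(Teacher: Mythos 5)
Your strategy---Zorn's lemma over the poset $\mathcal P$ of pairs $(C,D)$ with $\cf_{M/C-D}(X,Y)=k$, closed off by a single-element deletion/contraction lemma---fails at exactly the point you identify as the crux: the chain condition is false, and the paper's own double-ladder example is a counterexample. Let $M$ be the finite-cycle matroid of the double ladder, let $X=\{x\}$ and $Y=\{y\}$ be single edges on the two different side rails, so that $\cf_M(X,Y)=1$, and let $r_1,r_2,\dots$ enumerate the rungs. Each pair $(\emptyset,\{r_1,\dots,r_n\})$ lies in $\mathcal P$, because after deleting finitely many rungs $x$ and $y$ still lie on a common finite cycle, while $\cf\le 1$ holds by Lemma~\ref{lem:minorconn}. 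But the union of this chain is $(\emptyset,R)$ with $R$ the set of all rungs, and $M-R$ is the direct sum of two disjoint double rays, so $\cf_{M-R}(X,Y)=0\neq k$. Your argument for the chain condition breaks because it only controls the \emph{contracted} side: you absorb $C\setminus C_\gamma$ into $U_\gamma$ and lift the finitely many fundamental circuits of $F$ through the contraction, but you say nothing about $D\setminus D_\gamma$, whose elements are absent from $N$ yet present in $N_\gamma$ and can create unboundedly many new circuits crossing $(U_\gamma,E(N_\gamma)\setminus U_\gamma)$ that $F$ does not meet. Concretely, in the example $F=\emptyset$ witnesses $\cf_{M-R}(U)=0$ for $U$ the rail containing $x$, yet $\cf_{M-R_n}(U)=\infty$ for every $n$, since the surviving rungs yield infinitely many disjoint finite cycles meeting both sides. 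So ``the same $F$ witnesses $\cf_{N_\gamma}(U_\gamma)\le k-1$'' is simply not true, Zorn's lemma does not apply, and indeed no transfinite one-element-at-a-time scheme can work; this is precisely the phenomenon the paper isolates when it shows that statement~\eqref{fail} fails.

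Two further problems are secondary but real: you never treat the case $\cf_M(X,Y)=\infty$, where witnessing sets $F$ are infinite and your finite-lifting device is unavailable (the paper handles it by a separate greedy construction of infinitely many disjoint circuits, each meeting $X$ and $Y$ in successive contractions); and your single-element lemma, while plausible, is only gestured at---the inequalities relating $\cf_N(U)$, $\cf_{N-e}(U)$ and $\cf_{N/e}(U)$ need rank-free proofs via Lemma~\ref{lem:del}. In any case that lemma cannot rescue the argument, since the failure is at limit stages. The paper's actual route is entirely different: shrink $X,Y$ to finite sets $X',Y'$ of size $k$ with $\cf_M(X',Y')=k$ using Lemma~\ref{lem:grow}, build an increasing sequence of \emph{finite} restrictions $M|Z_t$ with $\cf_{M|Z_t}(X',Y')\ge t$ using Lemma~\ref{lem:break}, apply the finite Theorem~\ref{TLinking} to $M|Z_k$, and place everything outside $Z_k$ into $D$. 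You would need to replace your Zorn step by something of this reduction-to-a-finite-minor type.
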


A fact that is related to Tutte's linking theorem, but quite a bit
simpler to prove, is that
for every element $e$ of 
a finite $2$-connected matroid $M$, one of $M/e$ or $M-e$ is still $2$-connected.
This fact extends to infinite matroids in a straightforward manner. Yet, in an
infinite matroid it is seldom necessary to only delete or contract
a single element or even a finite set. Rather, to be useful we would
need that 
\begin{equation}\label{fail}
\begin{minipage}[c]{0.8\textwidth}\em
for any set $F\subseteq E(M)$ of a $2$-connected matroid
$M=(E,\mathcal I)$ there always exists a partition $(A,B)$ of $F$
so that $M/A-B$ is still $2$-connected.
\end{minipage}\ignorespacesafterend 
\end{equation} 

\begin{figure}[htbp]
  \centering
  \includegraphics[width=0.7\linewidth]{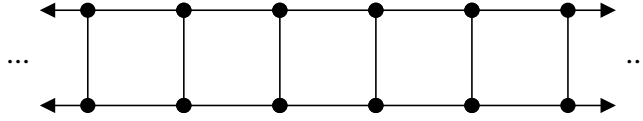}
  \caption{The double ladder}
  \label{fig:dladder}
\end{figure}
Unfortunately, such a partition of $F$ does not need to exist.
Indeed, consider the finite-cycle matroid $M_\text{FC}$ obtained from the
double ladder (see Figure~\ref{fig:dladder}), i.e.\ the matroid
on the edge set of the double in which an edge set is independent
if and only if it does not contain a finite cycle.
If $F$ is the set of rungs then 
we cannot contract any element in $F$ without destroying $2$-connectivity,
but if we delete all rungs we are left with two disjoint double rays.

In view of the failure of~\eqref{fail} in infinite matroids, even in finitary matroids 
like the example, 
it appears somewhat striking that Tutte's linking theorem does
extend to, at least, finitary matroids.

\medskip
Before we can finally prove Theorem~\ref{thm:linking} we need one more
definition and one lemma that will be essential when 
$\cf(X,Y)<\infty$.
Let $M'=M/C-D$ be a minor of a matroid $M$, for some disjoint
sets $C,D\subseteq E(M)$.  We say a $k$-separation $(X', Y')$ of $M'$ 
\emph{extends} to a $k$-separation of $M$ if there exists a  
$k$-separation $(X, Y)$ of $M$
such that $X' \subseteq X$, $Y' \subseteq Y$.  
The $k$-separation $(X', Y')$ is \emph{exact} if $\cf(X',Y')=k-1$.

\begin{lemma}
\label{lem:break}
Let $M$ be a matroid and let $X \cup Y \subseteq E(M)$ be disjoint subsets of $E(M)$.
Let $(X, Y)$ be an exact $k$-separation of $M|(X \cup Y)$ that does not 
extend to a  $k$-separation of $M$.  
Then there exist circuits $C_1$ and $C_2$ 
of $M$ such that 
$(X, Y)$ does not extend to a  $k$-separation of $M|(X \cup Y \cup C_1\cup C_2)$
\end{lemma}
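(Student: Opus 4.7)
The plan is to use a basis-exchange argument to read off the two circuits as fundamental circuits of carefully chosen basis extensions, and then to verify that these two circuits already witness the non-extensibility inside $M':=M|(X\cup Y\cup C_1\cup C_2)$.

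First I fix bases $B_X$ of $M|X$ and $B_Y$ of $M|Y$. Because $(X,Y)$ is an exact $k$-separation of $M|(X\cup Y)$, there is a set $F\subseteq B_X\cup B_Y$ with $|F|=k-1$ such that $B_0:=(B_X\cup B_Y)\sm F$ is a basis of $M|(X\cup Y)$. Write $Z:=E(M)\sm(X\cup Y)$. The hypothesis that $(X,Y)$ does not extend in $M$ means $\cf_M(X,Y)\geq k$, so in particular $\cf_M(X)\geq k$ and, by the duality $\cf_M(A)=\cf_M(E\sm A)$, also $\cf_M(Y)\geq k$. Using (IM), extend $B_Y$ inside $Y\cup Z$ to a basis $B_Y\cup J^Y$ of $M-X$, with $J^Y\subseteq Z$. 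By Lemma~\ref{lem:del}(ii), pick $F_X\subseteq B_X$ with $|F_X|=\cf_M(X)\geq k$ so that $B^{(1)}:=(B_X\sm F_X)\cup B_Y\cup J^Y$ is a basis of $M$. Since $|F_X|\geq k>k-1=|F|$, the set $(B_X\sm F_X)\cup B_Y$ has size $|B_0|-(|F_X|-(k-1))<|B_0|$, so it cannot span $M|(X\cup Y)$; hence some $f\in F_X$ lies outside its closure, and the fundamental circuit $C_1$ of $f$ with respect to $B^{(1)}$ must contain an element of $J^Y$, giving $C_1\cap Z\ne\emptyset$. A mirror-image argument starting from $\cf_M(Y)\geq k$ produces $J^X\subseteq Z$, $F_Y\subseteq B_Y$ with $|F_Y|\geq k$, a basis $B^{(2)}:=B_X\cup(B_Y\sm F_Y)\cup J^X$ of $M$, and an element $g\in F_Y$ whose fundamental circuit $C_2$ with respect to $B^{(2)}$ meets $J^X\subseteq Z$.

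It remains to verify that $\cf_{M'}(X,Y)\geq k$ for $M':=M|(X\cup Y\cup C_1\cup C_2)$; writing $Z':=(C_1\cup C_2)\cap Z$, this reduces to showing $\cf_{M'}(X\cup Z_X)\geq k$ for every partition $(Z_X,Z_Y)$ of $Z'$. The strategy is to extend $(Z_X,Z_Y)$ arbitrarily to a partition of all of $Z$, use the hypothesis to produce $k$ elements of the relevant basis union that must be destroyed to obtain a basis of $M$, and then, with the help of the strong circuit elimination axiom~(C3), recombine the fundamental circuits of $F$ in $M|(X\cup Y)$ with $C_1$ and $C_2$ to localise these $k$ destructions inside $M'$. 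This final verification is the main obstacle: in particular, in the awkward case $C_1\cap Y=\emptyset$ or $C_2\cap X=\emptyset$, one needs the full strength of $\cf_M(X,Y)\geq k$ -- not merely $\cf_M(X),\cf_M(Y)\geq k$ -- to rule out the dangerous partitions of $Z'$ for which the obvious $M$-witnesses of non-extension lie entirely outside $Z'$.
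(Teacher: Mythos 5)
Your construction of $C_1$ and $C_2$ is carried out, but the argument stops exactly where the lemma begins: you never verify that $(X,Y)$ fails to extend to a $k$-separation of $M':=M|(X\cup Y\cup C_1\cup C_2)$, and you yourself flag this verification as ``the main obstacle''. This is not a routine check that can be deferred --- it is the entire content of the lemma, and the circuits you chose do not appear to carry enough structure to make it go through. Concretely, your $C_1$ is only guaranteed to meet $X$ (through $f\in B_X$) and $Z:=E(M)\sm(X\cup Y)$ (through $J^Y$), and $C_2$ to meet $Y$ and $Z$; nothing forces $C_1\cap Y\ne\emptyset$ or $C_2\cap X\ne\emptyset$, nothing ties $C_1\cap Z$ to $C_2\cap Z$, and nothing prevents a candidate separation of $M'$ from splitting $C_1\cap Z$ between its two sides. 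In the ``awkward case'' you mention, the partition $(X\cup(C_1\cap Z),\,Y\cup(C_2\cap Z))$ (or a refinement of it) is a perfectly plausible $k$-separation of $M'$, and the hypothesis $\cf_M(X,Y)\ge k$ constrains separations of $M$, not of the restriction $M'$ --- so there is no visible route to a contradiction from your data.

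The paper's proof chooses the circuits with three additional properties that are precisely what the final verification needs: (i) $C_1\cap Y\ne\emptyset$ and $C_2\cap X\ne\emptyset$; (ii) each $C_i\sm(X\cup Y)$ is a circuit of $M/(X\cup Y)$, which (via an auxiliary claim about $M|(X\cup Y\cup C)$) forces any extending $k$-separation of $M'$ to place the whole of $C_i\sm(X\cup Y)$ on a single side; and (iii) $C_1$ and $C_2$ share an element $e\notin X\cup Y$, which forces both of these sets onto the \emph{same} side. Then whichever side that is, one of the two circuits crosses into the other side, and its part outside $X\cup Y$, being dependent in $M/(X\cup Y)$, yields a circuit avoiding the $(k-1)$-element hitting set, contradicting exactness. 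The existence of a usable $e$ is itself nontrivial: the paper shows that if every element of $Z$ lay in a component of $M/X$ avoiding $Y$ or a component of $M/Y$ avoiding $X$, then $(X,Y)$ would already extend to a $k$-separation of $M$, contrary to hypothesis. None of this machinery is recoverable from fundamental circuits of basis extensions as in your construction, so the gap is structural rather than an omitted computation.
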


\begin{proof}
We define 
\begin{equation*}
Comp_X:=\left \{ D : 
\text{ $D$ a component of $M/X$ such that $D \cap Y = \emptyset$}\right\}
\end{equation*} to be the 
set of connected components of $M/X$ that do not contain an element of $Y$.  Symmetrically, 
we define $Comp_Y$ to be the components of $M/Y$ that do not contain an element of 
$X$.

We claim that
\begin{equation}
\label{clm:subpart}
\emtext{
if $A \in Comp_X$ and $B \in Comp_Y$ such that $A \cap B 
\neq \emptyset$, then $A = B$.
}
\end{equation}

Assume the claim to be false and let $A$ and $B$ be a counterexample.  
Without loss of 
generality, we may assume there exists an element $x \in A \cap B$ and an element $y \in B\sm A$.  
By definition (and Lemma~\ref{cir3}), 
there exists a circuit $C_Y$ of $M$ such that $C_Y \sm  Y$ is a circuit of $M/Y$ containing $x$ and $y$.  
Now consider the circuit $C_Y$ in the matroid $M/X$.  
By Lemma~\ref{propB}, 
the dependent set $C_Y \sm  X$ (in fact, $C_Y$ is disjoint from $X$ but
we will not need this)
contains a circuit of $M/X$ that contains $x$  but not $y$
as $y$ and $x$ 
lie in distinct components of $M/X$.  
It follows that there exists 
a circuit $C_X$ in $M$ such that $x\in C_X\sm  X\subseteq C_Y-y$.  
By the finite circuit exchange axiom or (C3), there exists a circuit $C\subseteq C_X \cup C_Y$
of $M$ containing $y$ but not $x$. 
Hence, there is a circuit $D\subseteq C\sm  Y$ in $M/Y$ with $y\in D$ and $x\notin D$.
Since $y\in B$, $D$ cannot meet $X$, which implies 
$D\subseteq C\sm (X\cup Y)\subseteq C_Y\sm  Y$.
As $x\notin D$, the circuit $D$ in $M/Y$ is a strict subset of the circuit $C_Y\sm  Y$,
a contradiction.  
 This proves the claim.
 
Note that it is certainly possible that a set $A$ lies in both $Comp_X$ and $Comp_Y$. 
In a slight 
abuse of notation, we let $E(Comp_X) = \bigcup_{A \in Comp_X} A$, and 
similarly define $E(Comp_Y)$.  

Next, let us prove that
\begin{equation}\label{clm:extend}
\begin{minipage}[c]{0.8\textwidth}\em
 If 
 $E(Comp_X) \cup E(Comp_Y) \cup X \cup Y = E(M)$, then the separation $(X, Y)$ 
 extends to a $k$-separation of $M$.  
\end{minipage}\ignorespacesafterend 
\end{equation} 

Indeed, 
consider the partition $(L,R)$ for $L:=X\cup E(Comp_X)$ and 
$R:=E(M)\sm  L\supseteq Y$.
We claim 
that $(L,R)$ is a  $k$-separation of $M$.
Pick bases $B_X$ and $B_Y$ of $M|X$ resp.\ of $M|Y$,  
use (IM) to extend $B_X$ to a basis $B_L$ of $M|L$, and 
let $B_R$ be a basis of $M|R$ containing $B_Y$.  

Consider a circuit $C\subseteq B_L\cup B_R$ in $M$, 
and suppose $C$ to contain an element $x\in B_L \sm  X$.  
The set $C \sm  X$ contains a circuit $C'$ in  
$M/X$ containing  $x$.  Since $(C\cap B_L) \sm  X$ is 
independent in $M/X$, the circuit $C'$ must contain an element $y\in B_R$.  This 
implies that $x$ and $y$ are in the same component of $M/X$, and consequently, 
$y \in E(Comp_X)$.  This contradicts the definition of 
the partition, implying that no such circuit $C$ and element $x$ exist.  
A similar argument 
implies that $B_L \cup B_R$ does not contain any circuit 
containing an element of $B_R \sm  Y$ 
by considering $M/Y$.  
We conclude that every circuit contained in $B_L \cup B_R$ 
must lie in $B_X \cup B_Y$.  
As $\cf(X,Y)=k-1$, there exists a set of $k-1$ elements 
intersecting every circuit contained in $B_X \cup B_Y$, and thus in 
$B_L\cup B_R$, which implies that 
$(L,R)$ forms a $k$-separation.
This completes the proof of (\ref{clm:extend}).

Before finishing the proof of the lemma, we need one further claim.
\begin{equation}\label{clm:seps}
\begin{minipage}[c]{0.8\textwidth}\em
Let $C$ be a circuit of $M$ such that $C \sm  (X \cup Y)$ is a circuit of $M/(X \cup Y)$.  Then 
the only  $k$-separations of $M|(X \cup Y \cup C)$ that extend $(X, Y)$ 
are $(X \cup (C\sm  Y), Y)$ and 
$(X, Y \cup (C\sm  X))$.  
\end{minipage}\ignorespacesafterend 
\end{equation}

Assume that $(X', Y')$ is a  $k$-separation that extends $(X, Y)$ 
in the matroid $M|(X \cup Y \cup C)$.  Let $C' := C \sm  (X \cup Y)$.  Assume that 
$(X', Y')$ induces a proper partition of $C'$, i.e.\ that 
$C' \cap X' \neq \emptyset$ and $C'\cap Y' \neq \emptyset$.  
Picking bases $B_X$ of $M|X$ and $B_Y$ of $M|Y$ we observe that
$B_X \cup (C'\sm  Y')$ and $B_Y \cup (C' \sm  X')$ form bases of $M|X'$ and $M|Y'$ 
respectively.  Assume there exists a set $F$ of $k-1$ elements intersecting every circuit 
contained in $B_X \cup B_Y \cup C$.  By our assumption that $(X, Y)$ is an exact 
$k$-separation, 
we see that $F \subseteq B_X \cup B_Y$.  However, $C'$ is a circuit of $M/(X \cup Y)$, 
or, equivalently, $C'$ is a circuit of $M/((B_X \cup B_Y) \sm F)$.  It follows 
that there exists a circuit contained in $C' \cup B_X \cup B_Y$ avoiding the set $F$, a contradiction.  
This completes the proof of (\ref{clm:seps}).

\medskip

Since, by assumption, $(X,Y)$ does not extend to a $k$-separation of $M$
it follows from (\ref{clm:extend}) that there  
is an $e\notin E(Comp_X) \cup E(Comp_Y)$. Then
there exists a circuit $C_1$ of $M$ containing $e$ with $C_1 \cap Y \neq \emptyset$ 
such that the following hold:
\begin{itemize}
\item $C_1 \sm  X$ is a 
circuit of $M/X$, and 
\item $C_1 \sm  (X \cup Y)$ is a circuit of $M/(X \cup Y)$.  
\end{itemize} 
To see that such a circuit $C_1$ exists, recall first that $e\notin E(Comp_X)$
implies that there is a circuit $C_X$ in $M/X$ containing $e$ so that 
$C_X\cap Y\neq\emptyset$.
Then $C_X\sm  Y$ contains a circuit $C'$ in $M/(X\cup Y)$
with $e\in C'$ (see Lemma~\ref{propB}). 
For suitable $A_X\subseteq X$ and $A_Y\subseteq Y$
it therefore holds, by Lemma~\ref{cir3}, that $C'\cup A_X\cup A_Y$ is a circuit of $M$. 
If $A_Y=\emptyset$ then $C'$ would be a dependent set of $M/X$ 
strictly contained in the circuit $C_X$.
Thus, $A_Y\neq\emptyset$ and $C_1:=A_X \cup A_Y\cup C'$ has the 
desired properties. 
Symmetrically, there exists a circuit $C_2$ containing $e$ and intersecting 
$X$ in at least one element such that $C_2 \sm Y$ is a circuit of $M/Y$ and 
$C_2 \sm  (X \cup Y)$ 
is a circuit of $M/(X \cup Y)$.

Let us now see that the circuits $C_1$ and $C_2$ are as required 
in the statement of the lemma.  
To reach a contradiction, suppose that  $(X, Y)$ extends to a $k$-separation 
$(X', Y')$ of $M|(X \cup Y \cup C_1 \cup C_2)$.
By symmetry, we may assume that $e\in X'$. By (\ref{clm:seps}), we see 
that $C_1\sm (X \cup Y) \subseteq X'$ and $C_2\sm  (X \cup Y) \subseteq X'$
as well.  Pick a basis $B_X$ of $M|X$, and a basis $B_Y$ of $M|Y$.
From $C_1\cap Y\neq\emptyset$ it follows that  $C_1 \sm  Y$ is independent in $M/X$.
Thus, $B_X \cup (C_1 \sm  Y)$ is independent and we can extend it with (IM)
to a  basis $B_{X'}$ of $M|X'$.  The set $B_Y$ forms a basis 
of $M|Y'$ as $Y' =   Y$.  
Choose $F\subseteq B_{X'} \cup B_Y$ 
so that $(B_{X'} \cup B_Y) \sm  F$ is independent.
As $(X,Y)$ is an exact $k$-separation
and $(X',Y)$ thus too, it follows that $|F|=k-1$. 
As $\cf_{M|X \cup Y}(X, Y) = k-1$, we see
$F\subseteq B_X\cup B_Y$.
However, 
the set $C_1 \sm  (X \cup Y)$ is dependent 
in $M/(X \cup Y)$ and thus in $M/((B_X\cup B_Y)\sm F)$.
Hence, there is a set $S\subseteq (B_X\cup B_Y)\sm F$ so that
$(C_1\sm (X\cup Y))\cup S\subseteq B_{X'}\cup B_Y\sm F$ is dependent in $M$, 
contradicting our choice of $F$.  
This contradiction 
implies that the separation $(X, Y)$ does not extend to a 
$k$-separation of $M|(X \cup Y \cup C_1 \cup C_2)$, 
which concludes the proof of the lemma.
\end{proof}

We now proceed with the proof of the linking theorem for finitary matroids.

\begin{proof}[Proof of Theorem~\ref{thm:linking}]
By Lemma~\ref{prop:kapdual}, $\cf_{M}(Z)=\cf_{M^*}(Z)$
holds for any $Z\subseteq E(M)$, which means that 
we may assume $M$ to be finitary. Recall that this implies that every
circuit of $M$ is finite. 
We will consider the case when $\cf_M(X, Y) = \infty$ and 
$\cf_M(X, Y) < \infty$ separately.  

Assume that  $\cf_M(X, Y) = \infty$.
We inductively define a series of disjoint circuits $C_1, C_2, \dots$ 
in different minors of $M$ as follows. 
Starting with $C_1$ to be chosen as 
a circuit in $M$ intersecting  both $X$ and $Y$,
assume $C_1, \dots, C_t$ to be defined for $t \ge 1$.  
Note that as $Z:=\bigcup_{i=1}^t C_i$ is finite,
we still have  $\cf_{M/Z}(X \sm  Z, Y\sm Z) = \infty$.
Thus, there exists a circuit
$C_{t+1}$ in $M /Z$
that meets both both $X \sm  Z$ and 
$Y \sm  Z$. Having finished this construction, we let
$C_X = ( \bigcup_{i=1}^\infty C_i ) \cap X$, 
$C_Y = ( \bigcup_{i=1}^\infty C_i ) \cap Y$, and 
$C = ( \bigcup_{i=1}^\infty C_i)\sm  (X \cup Y)$.  
Set $D = E(M) \sm  (X \cup Y \cup C)$.

In order to show $\cf_{M/C-D}(X, Y) = \infty$
observe first that $C_X$ (and symmetrically, $C_Y$)  
is an independent set in $M/C$.  
If not then there exists a circuit $A \subseteq C_X \cup C$.  
Given that $M$ is finitary and $A$ thus finite, there 
exists a minimal index $t$ such that $A \subseteq \bigcup_{i=1}^t C_i$.  
It follows that $A \sm  (\bigcup_{i=1}^{t-1} C_i)$ 
is dependent in $M/(\bigcup_{i=1}^{t-1} C_i)$.
Since $A$ is disjoint from $Y$ but $C_t\cap Y\neq\emptyset$,
the dependent set $A \sm  (\bigcup_{i=1}^{t-1} C_i)$ 
is also a strict subset of the circuit $C_t$, 
a contradiction.

Let 
$B_X$ be a basis of $X$ containing $C_X$, and let $B_Y$
be a basis of $Y$ containing $C_Y$ in $M/C-D$.  
Assume there exists a finite set $F$ such that 
$(B_X \cup B_Y) \sm   F$ is a basis of $M/C-D$. 
Then for all $f \in F$, there exists a (fundamental)
circuit $A_f \subseteq B_X \cup B_Y$ of $M/C-D$ 
with $A_f \cap F = \{f\}$.  
Since the circuits $A_f$ are finite and the $C_i$ pairwise
disjoint, we may
choose $t$ large enough so that 
$C_t \cap A_f = \emptyset$ for all $f \in F$.
Lemma~\ref{propB} ensures the existence of 
a circuit $K\subseteq C_X\cup C_Y$ in $M/C-D$ with 
$K\sm \bigcup_{f\in F}A_f\neq\emptyset$.
By the finite circuit exchange axiom or (C3), there exists a circuit contained in 
$( K \cup \bigcup_{f \in F}A_f) \sm  F\subseteq (B_X\cup B_Y)\sm F$, 
a contradiction.  It follows that $\cf_{M/C-D}(X, Y) = \infty$, as claimed.

\medskip
We now consider the case when $\cf_M(X, Y) = k < \infty$.  By repeatedly 
applying Lemma~\ref{lem:grow}, there 
exists a set $X' \subseteq X$ and $Y' \subseteq Y$ such that $\cf_M(X', Y') = k$ and 
$|X'| = |Y'| = k$. (Observe that $\cf(X',Y')=k$ implies $|X'|,|Y'|\geq k$.) 
We shall find a partition $(C',D')$ of
$E(M)\sm (X'\cup Y')$ such that $\cf_{M/C' - D'}(X', Y') = k$.  
Then, Lemma~\ref{lem:minorconn} implies that
setting $C = C' \sm  (X \cup Y)$ and $D = D' \sm (X \cup Y)$ 
results in 
$\cf_{M/C-D}(X, Y) = k$ as desired. 

In order to find such $C'$ and $D'$ we 
will inductively define for $t\leq k$ finite
sets $Z_t \subseteq E(M)$ with $Z_{t-1}\subseteq Z_t$ 
such that in the restriction $M|Z_t$ it holds that 
$\cf_{M|Z_t} (X', Y') \geq t$.
For $t=1$, pick a circuit $A$   intersecting both $X'$ and $Y'$, 
and let $Z_1 =X' \cup Y' \cup A$. As $M$ is finitary $Z_1$ is finite,
and its choice ensures $\cf_{M|Z_1}(X', Y') \geq 1$.  

Assume that for $t<k$ 
we have defined $Z_{t-1}$, and and observe that as $Z_{t-1}$ is finite,
there are only finitely many $t$-separations in $M|Z_{t-1}$ 
separating $X'$ and $Y'$, all of which are exact.  
By applying Lemma~\ref{lem:break} to each of those, 
we deduce that there exists a finite set of circuits $A_1, A_2, \dots, A_l$ 
such that for $Z_t:=Z_{t-1} \cup A_1 \cup \dots \cup A_l$ we get
$\cf_{M|Z_t}(X', Y') \geq t$.  

To conclude, note that the matroid $M|Z_k$ is finite 
and that $\cf_{M|Z_k}(X', Y') = k$.  By Theorem~\ref{TLinking}, there 
exists a partition $(C', \tilde D)$ 
of $Z_k \sm  (X' \cup Y')$ such that $\cf_{(M|Z_k)/C'-\tilde D}(X', Y') = k$.  
Consequently, we obtain $\cf_{M/C'-D'}(X', Y') = k$
for $D':=\tilde D \cup (E(M) \sm  Z_k)$, which completes the proof of the theorem.  
\end{proof}

\bibliographystyle{amsplain}
\bibliography{../graphs}
\small
\vskip2mm plus 1fill
Version 15 March 2010

\end{document}